\let\uml\"
\newcommand{\Gal}{\operatorname{Gal}}
\newcommand{\p}{\mathfrak{p}}
\newcommand{\Fp}{\mathbf{F}_{\mathfrak{p}}}
\newcommand{\OK}{\mathcal{O}_{K}}
\newcommand{\Fl}{\mathbf{F}_{\ell}}
\newcommand{\Aut}{\operatorname{Aut}}
\newcommand{\im}{\operatorname{im}}
\newcommand{\Z}{\mathbf{Z}}
\newcommand{\F}{\mathbf{F}}
\newcommand{\Q}{\mathbf{Q}}
\newcommand{\GSp}{\operatorname{GSp}}
\newcommand{\PSL}{\operatorname{PSL}}
\newcommand{\PGL}{\operatorname{PGL}}
\newcommand{\PSU}{\operatorname{PSU}}
\newcommand{\Sp}{\operatorname{Sp}}
\newcommand{\GL}{\operatorname{GL}}
\newcommand{\SL}{\operatorname{SL}}
\newcommand{\AGL}{\operatorname{AGL}}
\newcommand{\Jac}{\operatorname{Jac}}
\newcommand{\C}{\mathbf{C}}
\newcommand{\St}{\mathsf{St}}
\newcommand{\disc}{\operatorname{disc}}
\numberwithin{equation}{subsection}
\theoremstyle{plain}
\newtheorem{thm}[equation]{Theorem}
\newtheorem{conjecture}[equation]{Conjecture}
\newtheorem*{qu}{Question}
\theoremstyle{remark}
\newtheorem{rmk}[equation]{Remark}
\newtheorem{exm}[equation]{Example}
\begin{document}

\title{Realizations of Unisingular Representations by Hyperelliptic Jacobians}

\author{John Cullinan}
\address{Department of Mathematics, Bard College, Annandale-On-Hudson, NY 12504, USA}
\email{cullinan@bard.edu}
\urladdr{\url{http://faculty.bard.edu/cullinan/}}

\begin{abstract}
A representation of a finite group $G$ on a finite dimensional vector space $V$ is called \textbf{unisingular} if every $g\in G$ has 1 as an eigenvalue in its action on $V$.  In this paper we show that certain unisingular representations can be realized as mod 2 representations of hyperelliptic Jacobians over $\Q$.  We additionally identify new unisingular representations of the symmetric and alternating groups.
\end{abstract}

\maketitle

\section{Introduction}

\subsection{Background and Motivation}

In this note we look at a specific question at the intersection of Inverse Galois Theory and the arithmetic of abelian varieties.  Our goal here is to raise the question of whether certain groups $G$ can be realized as Galois over $\Q$, where we additionally specify the degree $d$ of the generating polynomial that we seek  and, furthermore,  ask for the natural symplectic embedding 
\[
G \hookrightarrow {\rm Perm}(d) \hookrightarrow \Sp_{2 \lfloor (d-1)/2 \rfloor}(2)
\]
to be irreducible (or absolutely irreducible).

Setting some terminology, if $\rho: G \to \GL(V)$ is a finite dimensional representation of a finite group, we say that $\rho$ is \textbf{unisingular} if $\det(1-\rho(g)) =0$ for all $g \in G$.  Alternatively, we say that $\rho(G)$ is a \textbf{fixed-point} subgroup of $\GL(V)$.   Unisingular representations have been studied independently in the context of Lie Theory (see, \emph{e.g.}~\cite{zalesski}) and in number theory (see, \emph{e.g.}~\cite{cy}) and, recently, together \cite{cz}.  In the number theoretic context, certain unisingular representations can be used to describe point-counts on abelian varieties over finite fields by interpreting the elements of $G$ as suitable Frobenius elements in a certain fixed-point theorem.  

Since sums of unisingular representations are unisingular, it makes sense to focus on (and possibly classify all) irreducible unisingular representations.  This is still an enormous task, but when the vector space $V$ is defined over a finite field $k$, some progress has been made.  Two ways to approach this problem are to 1) fix $\dim V$ and vary $k$, and 2) study specific classes of groups and determine for which dimensions $\dim V$ and fields $k$  they have unisingular representations.  For example, 2-dimensional unisingular representations, and 4- and 6-dimensional unisingular \emph{symplectic} representations over finite prime fields are necessarily reducible (see \cite{katz} and \cite{sp6}), while all 3-dimensional irreducible unisingular representations were classified in \cite{gl3}.  For Lie-theoretic applications, we point the reader to \cite{gt} and \cite{zalesski} as just some of many examples of that literature.

Let us now turn to the application that concerns us here.  Let $A$ be an abelian variety of dimension $g \geq 1$ defined over a number field $K$.   If $\p \subseteq \OK$ is a non-zero prime of good reduction for $A$, then write $A_\p$ for the reduction modulo $\p$ of $A$.  Then $A_\p$ is a $g$-dimensional abelian variety defined over the residue field $\Fp := \OK/\p$. Fix a rational prime number $\ell$ and consider the following phenomenon.  Suppose $A_\p$ has a point of order $\ell$ defined over $\Fp$, for every good $\p$, but no member of the $K$-isogeny class of $A$ has a global point of order $\ell$.  In other words, suppose all but finitely many of the finite groups $A_\p(\Fp)$ have order divisible by $\ell$, but neither $A(K)$ nor any member $A'$ of the $K$-isogeny class of $A$ has a point of order $\ell$ defined over $K$.   By contrast, if \emph{any}  member of the $K$-isogeny class of $A$ had a point of prime order $\ell$ defined over $K$, then all $A'_\p$ would have a point of order $\ell$ defined over $\Fp$ for all primes $\p$ of good reduction and \emph{all} $K$-isogenous $A'$. 

The first substantial work on this setup was done by Katz in \cite{katz} where he showed that it amounts to understanding unisingular symplectic representations.  Specifically, the hypothesis that $A_\p(\Fp)$ has a point of fixed order $\ell$, for almost all primes $\p$, means that $\det (1 - \overline{\rho}_{A,\ell}(g)) = 0$ for all $g \in \Gal(\overline{K}/K)$, where 
\[
\overline{\rho}_{A,\ell}: \Gal(\overline{K}/K) \to \GSp_{2g}(\Fl)
\]  
is the mod $\ell$ representation describing the action of Galois on the $\ell$-torsion points of $A$.  Subsequently, whether or not any members of the $K$-isogeny class of $A$ have a global point of order $\ell$ is equivalent to whether or not the Jordan-H\"older series (semisimplification) of the representation $\overline{\rho}_{A,\ell}$ contains the trivial representation.  Duly motivated, we can now state our group-theoretic question as follows.

\begin{qu}
For all prime numbers $\ell$ and all positive integers $g$, can we classify the \textbf{irreducible} fixed-point subgroups $G$ of $\GSp_{2g}(\Fl)$?  
\end{qu}

Such a classification would then  give us a classification of mod $\ell$ representations attached to abelian varieties defined over a number field $K$ that have a point of order $\ell$ when reduced modulo all but finitely many $\p$, but neither they nor any member of their $K$-isogeny class has a global point of order $\ell$.

For abelian varieties of dimensions 1 and 2 (so, symplectic representations of dimensions 2 and 4, respectively), Katz showed that for any prime $\ell$, if the mod $\ell$ representation is unisingular, then its Jordan-H\"older series contains the trivial representation.  This largely amounts to a group theory argument showing that fixed-point subgroups $G$ of $\GL_2(\Fl)$ (resp.~$\GSp_4(\Fl)$) have a trivial composition factor; concretely, up to a global change of basis, the elements of $G$ fix a common line.

For abelian threefolds (\emph{i.e.} for the group $\GSp_6(\Fl)$) this is no longer the case for odd $\ell$: there exist (reducible) unisingular representations whose Jordan-H\"older series do not contain the trivial representation.  However, when $\ell=2$ all fixed-point subgroups of $\Sp_6(\F_2)$ contain the trivial representation, as shown in  \cite{2tors}. In dimensions $\geq 4$ we get our first examples of irreducible, symplectic, unisingular representations.  Namely, in \cite{2tors} we raised the example, originally communicated  to us by Serre \cite{serre_letter}, that the Steinberg representation $\St: \SL_3(\F_2) \to \Sp_8(\F_2)$ is both unisingular and absolutely irreducible.  Using this example as a starting point, in \cite{cz} we classified several new families of unisingular symplectic representations, which we will recall below.  Equipped with a unisingular symplectic representation, we can turn to arithmetic and ask whether or not we can construct an abelian variety $A$ with this specific representation, \emph{while minimizing the degree of the number field over which $A$ is defined}. 

Why do we ask for a minimal degree of definition?  Fix $\ell$ and $g$ and suppose that $G \subseteq \GSp_{2g}(\Fl)$ is a fixed-point subgroup.  Let $A/\Q$ be an abelian variety such that the mod $\ell$ representation 
\[
\overline{\rho}_{A,\ell}: \Gal(\overline{\Q}/\Q) \to \GSp_{2g}(\Fl)
\]
is surjective (such $A$ are known to exist by general principles).  Then consider the base-change to the fixed field of $G$:
\[
\xymatrix{
&\Q(A[\ell]) \ar@{-}^{\GSp_{2g}(\Fl)}[dd]  \ar@{-}_{G}[dl]\\
K:=\Q(A[\ell])^{G} \ar@{-}[dr]& \\
&\Q}
\]
This produces what we want: an abelian variety $A$, defined over a number field $K$ with mod $\ell$ image isomorphic to  $G$.  However, the degree $[K:\Q]$ is typically very large and such a construction does not necessarily give a deeper understanding of the connection between unisingular Galois representations and abelian varieties.  For example, as we will see below, we can construct a 4 dimensional abelian variety defined over $\Q$ with an irreducible unisingular mod 2 representation of order $2^4.3^3$.  By comparison, the order of $\Sp_{8}(\F_2)$ is $2^{16}.3^5.5^2.7.17$.  In general, there may be some arithmetic obstruction to constructing abelian varieties over $\Q$ with specified mod $\ell$ image (\emph{e.g.}~surjective determinant), which is why we ask more generally for a minimal field of definition, rather than asking for it over $\Q$.

%\hrulefill

%The arithmetic significance of this fact is that, over some number field $K$, there exists an abelian fourfold $A$ with absolutely irreducible mod 2 representation such that for every prime $\p$ of good reduction, $A$  has an even number of points, but no member of $A$'s $K$-isogeny class has a global 2-torsion point.  However, the 
%To get the existence of abelian varieties defined over \emph{some} number field with unisingular mod $\ell$ representations,  we use an easy Galois theory argument. If $G \subseteq \GSp_{2g}(\Fl)$ is a fixed-point subgroup, then let $A/\Q$ be any abelian variety with $\im \overline{\rho}_{A,\ell} = \GSp_{2g}(\Fl)$.  If $K$ is the subfield of $\Q(A[\ell])$ fixed by $G$, then the base change of $A$ to $K$ gives an abelian variety whose mod $\ell$ image is isomorphic to $G$.   The consequence is that the degree $[K:\Q]$ could be very large. 

%From the point of view of explicit construction, this is not optimal.  We are interested in examples defined over $\Q$ or, if this is provably not possible, defined over a number field of minimal degree over $\Q$.  

When $\ell = 2$, there is a natural source of potential examples: hyperelliptic Jacobians.  If $k$ is a field of characteristic different from 2, $C_f$ the hyperelliptic curve of genus $g$ defined over $k$ by the equation
\[
y^2 = f(x),
\]
and $J_f :=\Jac(C_f)$ its Jacobian, then it is well known that the 2-torsion field $k(J_f[2])$ has Galois group isomorphic to $\Gal_k(f)$.  As above, set $d = \deg(f)$ so that $g = \lfloor \frac{d-1}{2} \rfloor$.  Studying the image of the mod 2 representation for unisingularity amounts to understanding the Galois group $\Gal_k(f)$ and its symplectic embedding 
\[
\Gal_k(f) \to {\rm Perm}(d) \to \Sp_{2g}(\F_2),
\]
where the first inclusion is basic Galois theory and the second is a standard construction in permutation group theory (see \cite[\S 2]{yelton_sym} for details).  This raises the next main question of the paper -- the realization problem.

%Taking $k=\Q$, it is also well known that the determinant $\det: \im \overline{\rho}_{A,\ell} \to \Fl^\times$ is surjective.  Therefore, in our search for unisingular representations attached to abelian varieties where the field of definition is minimized, when $\ell = 2$ we can work inside the group $\Sp_{2g}(\F_2)$ versus $\GSp_{2g}(\Fl)$ for odd $\ell$. 

\begin{qu}
Can irreducible fixed-point subgroups of $\Sp_{2g}(\F_2)$ be realized as Galois groups over $\Q$ via the 2-torsion subgroup of hyperelliptic Jacobians of genus $g$?
\end{qu}

If we could answer this question for a particular group $G$, then we would have solved the realization problem for $G$ over $\Q$.  This is a problem of Inverse Galois Theory where we are additionally specifying the degree of the polynomial generating the Galois group and asking for  (absolute) irreducibility of the symplectic representation of $G$. By restricting to the prime $\ell=2$, we simplify the general problem of realization in two ways.  First, when $\ell = 2$, the  symplectic similitude group and isometry group coincide:  $\Sp_{2g}(\F_2) \simeq \GSp_{2g}(\F_2)$.  This allows us to avoid surjectivity conditions on the determinant representation when attempting to construct explicit examples. Secondly, when $\ell=2$ there is a well understood mechanism for explicit computation in the 2-torsion subgroup.  For odd $\ell$, it is only for very specific examples of Jacobians that we can explicitly write down equations of the underlying curve where we can also rigorously show that the image of the mod $\ell$ representation is exactly $G$ which, as the example above indicates, is typically much smaller than the maximal image.  By comparison, showing an image is large typically amounts to computing Frobenius elements and showing that the image meets enough conjugacy classes to generate a large subgroup of $\GSp_{2g}(\Fl)$.

\subsection{Statement of Main Results}  We are now ready to describe our main results.  Let $f \in k[x]$ be an irreducible polynomial of degree $d \geq 3$ with Galois group $G$.  Let $g = \lfloor (d-1)/2 \rfloor$.  We say that the hyperelliptic Jacobian $J_f$ \textbf{unirealizes $G$ over $k$} if the composition $\varphi$ of the natural permutation representation with its symplectic embedding (see \cite{yelton_sym} for the details of this construction)
\[
\Gal_k(f) \simeq \underbrace{G \hookrightarrow S_{d} \hookrightarrow \Sp_{2g}(\F_2)}_{\varphi}
\]
is \textbf{irreducible} and \textbf{unisingular}; note that the irreducibility of the polynomial $f$ does \emph{not} guarantee the irreducibility of the representation $\varphi: {G} \to \Sp_{2g}(\F_2)$.

The smallest $g$ for which there exist any irreducible fixed-point subgroups $G \subseteq \Sp_{2g}(\F_2)$ is $g=4$ and the maximal fixed-point subgroups of $\Sp_8(\F_2)$ are ${\rm L}_3(2):2$ and $\AGL_2(3)$ \cite{cz}.  Note that ${\rm L}_3(2):2 \simeq \PGL_2(7)$ and recall from \cite[Thm.~1.7]{cz} that $\PGL_2(q)$ has an irreducible unisingular representation into $\Sp_{q+1}(\F_2)$ when, simultaneously, $q$ is odd, $4|(q+1)$, and $3|(q-1)$ .  Thus, the group $\PGL_2(7)$ is the smallest of an infinite family of fixed-point subgroups of sympectic groups.  (The two smallest groups in this family are $\PGL_2(7)$ and $\PGL_{2}(19)$.)  Our first result is on the unirealizations of these smallest examples. 

\begin{thm}
\begin{enumerate}
\item There exist infinitely many polynomials $f \in \Q[x]$ of degree 9 such that $J_f$ unirealizes $\AGL_2(3)$ over $\Q$.
%\item There exist polynomials $f \in \Q[x]$ of degree 20 such that $J_f$ unirealizes $\PGL_2(19)$.
\item 
The 8-dimensional, irreducible, unisingular, $\F_2$-representation of ${\rm L}_3(2):2$ (Steinberg representation) does not occur as the mod 2 representation of a genus 4 hyperelliptic Jacobian.
\end{enumerate}
\end{thm}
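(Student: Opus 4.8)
The two assertions are essentially independent: part~(1) is a construction controlled by Hilbert irreducibility, while part~(2) is a finite group-theoretic computation, and in fact holds over any field of characteristic $\neq 2$, not merely over $\Q$.

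For part~(1), recall from \cite{cz} that $\AGL_2(3)$, acting on the nine points of $\F_3^2$, embeds in $\Sp_8(\F_2)$ as an absolutely irreducible fixed-point subgroup; so it suffices to produce infinitely many degree-$9$ polynomials $f\in\Q[x]$ whose Galois group, \emph{as a permutation group on the roots of $f$}, is $\AGL_2(3)$ in its natural degree-$9$ representation, and with pairwise non-isomorphic Jacobians $J_f$. The plan is to realize such $f$ as ``division-by-$3$'' polynomials of elliptic curves. Fix an elliptic curve $E/\Q$ with surjective mod-$3$ Galois representation and a point $P_0\in E(\Q)$ of infinite order, and let $f\in\Q[x]$ be the minimal polynomial of the $x$-coordinate of a point $R$ with $3R=P_0$. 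The fibre $T_{P_0}=\{R : 3R=P_0\}$ is a torsor under $E[3]$, so Galois acts on it through $\AGL_2(3)=E[3]\rtimes\GL_2(3)$; since $P_0$ is not $2$-torsion the nine points of $T_{P_0}$ have distinct $x$-coordinates, and — because the only normal $2$-subgroup of the primitive group $\AGL_2(3)$ is trivial — these $x$-coordinates generate all of $\Q(T_{P_0})$. Hence $\Gal_\Q(f)$ is the image of the torsor representation: a subgroup of $\AGL_2(3)$ that surjects onto $\GL_2(3)$ and whose translation part is a $\GL_2(3)$-submodule of the irreducible module $\F_3^2$, so that $\Gal_\Q(f)=\AGL_2(3)$ exactly when that translation part is nonzero, i.e.\ when $P_0\notin 3\,E(\Q(E[3]))$. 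To obtain infinitely many examples I would work in a one-parameter family $E_t$ with a non-torsion section and generically surjective mod-$3$ image, verify non-splitness of the torsor at one well-chosen specialization (for instance by reducing at a good prime whose group order is divisible by $3$ but at which $P_0$ is not $3$-divisible), deduce that the generic Galois group over $\Q(t)$ is $\AGL_2(3)$, and invoke Hilbert irreducibility; varying $j(E_t)$ then yields infinitely many non-isomorphic $J_{f_t}$. (Alternatively one may cite a known degree-$9$ realization of $\AGL_2(3)$ over $\Q$ and apply Hilbert irreducibility to a pencil through it.) The main difficulty is the \emph{simultaneous} control, across the family, of the mod-$3$ image and of the non-$3$-divisibility of $P_0$; the rest is bookkeeping.

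For part~(2), a genus-$4$ hyperelliptic curve $y^2=f(x)$ has $\deg f = d\in\{9,10\}$, and by \cite[\S2]{yelton_sym} the mod-$2$ representation $\overline{\rho}_{J_f,2}$ is the composite
\[
\varphi\colon \Gal_\Q(f)\hookrightarrow S_d\longrightarrow \Sp_8(\F_2),
\]
in which the $8$-dimensional symplectic module is the subquotient $W$ of the permutation module $\F_2^{d}$ equal to $\F_2^{d}/\langle\mathbf 1\rangle$ when $d=9$ and to $\mathbf 1^{\perp}/\langle\mathbf 1\rangle$ when $d=10$. First I would observe that $\varphi|_{S_d}$ is injective for $d\in\{9,10\}$: the classes of the weight-two vectors $e_i+e_j$ span $W$, they are nonzero and pairwise distinct (as $d>4$), and so a permutation acting trivially on $W$ must stabilize every $2$-element subset of $\{1,\dots,d\}$ and hence be the identity. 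Consequently, if $\overline{\rho}_{J_f,2}$ were isomorphic to the Steinberg representation, then $\Gal_\Q(f)\cong\varphi(\Gal_\Q(f))=\St({\rm L}_3(2){:}2)\cong \PGL_2(7)$, acting faithfully on $d\le 10$ points.

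Next I would classify the faithful actions of $\PGL_2(7)$ of degree $\le 10$. Its proper quotients are $\Z/2$ and the trivial group, its only nontrivial proper normal subgroup is $\PSL_2(7)$, and its minimal faithful transitive degree is $8$: the Borel subgroup $7{:}6$, the stabilizer of a point of $\PP^1(\F_7)$, has index $8$ and is core-free, whereas the remaining maximal subgroups have orders $168$, $24$, $16$, $12$, so there is no core-free subgroup of smaller index, and $9,10\nmid 336=|\PGL_2(7)|$. Hence every orbit has size $1$, $2$, or $8$; a faithful orbit has size $8$ and is a copy of $\PP^1(\F_7)$; and a faithful action on $\le 10$ points consists of exactly one such orbit together with one fixed point (if $d=9$) or two fixed points or a $2$-point orbit (if $d=10$). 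In every case $\F_2^{d}\cong\F_2[\PP^1(\F_7)]\oplus R$ with every composition factor of $R$ trivial. Now $\F_2[\PP^1(\F_7)]$ is $8$-dimensional and \emph{reducible} — it contains the line of constant vectors — hence has no $8$-dimensional composition factor; therefore neither $\F_2^{d}$ nor its subquotient $W$ has the $8$-dimensional irreducible Steinberg module among its composition factors, contradicting $W\cong\St$. This proves~(2). The only points needing care are the identification of $W$ inside $\F_2^{d}$ for $d$ odd versus even, and the elementary fact that the $e_i+e_j$ generate $W$ and separate permutations; the one piece of representation theory used — reducibility of the $8$-point permutation module — is immediate.
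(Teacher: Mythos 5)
For part~(2) your argument is correct and in fact proves a bit more than the paper does. The paper's proof is a one-line observation: since neither $9$ nor $10$ divides $|{\rm L}_3(2){:}2|=336$, the group $\PGL_2(7)$ is not a transitive subgroup of $S_9$ or $S_{10}$, hence not the Galois group of an irreducible polynomial of degree $9$ or $10$. That remark leaves implicit the case of reducible $f$ (intransitive Galois image isomorphic to $\PGL_2(7)$); your classification of all faithful actions on at most $10$ points (one $\PP^1(\F_7)$-orbit plus orbits of size $1$ or $2$), combined with the fact that $\F_2[\PP^1(\F_7)]$ contains the constants and so has no $8$-dimensional composition factor, disposes of that case as well, since the Steinberg module could then never appear as a composition factor of the $2$-torsion subquotient. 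Two small blemishes, neither of which affects the conclusion: $S_4$ (order $24$) is not a maximal subgroup of $\PGL_2(7)$ (the maximal subgroups are $\PSL_2(7)$, $7{:}6$, $D_{16}$, $D_{12}$), and your orbit-size claim should explicitly exclude indices $3$ through $7$, e.g.\ by noting that a subgroup of order strictly between $42$ and $168$ would have to lie in $\PSL_2(7)$, whose largest proper subgroups have order $24$.

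For part~(1) you take a genuinely different route from the paper, and the structural half of it is sound, but as written it is a program rather than a proof. The paper proceeds by citation: Malle's two-parameter families of degree-$9$ polynomials with Galois group $\AGL_2(3)$ \cite[\S 7]{malle1} give infinitely many irreducible specializations over $\Q$, \cite[Thm.~4.1]{cz} gives irreducibility and unisingularity of $\AGL_2(3)\hookrightarrow S_9\to\Sp_8(\F_2)$, and a conjugacy-uniqueness remark identifies the Jacobian's symplectic embedding with that one. Your $3$-division construction ($f$ the minimal polynomial of $x(R)$ with $3R=P_0$) would replace Malle's polynomials, and it has the virtue that the action on the nine roots is visibly the natural affine action, so the appeal to \cite{cz} needs no conjugacy discussion; the identification $\Gal_\Q(f)=$ image of the torsor representation is fine, since $\sigma(R)\in T_{P_0}$ and $x$ is injective on the fibre. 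The genuine gap is that the existence statement is never actually established: you exhibit no curve or family with surjective mod-$3$ image and $P_0\notin 3E(\Q(E[3]))$, and you yourself flag this verification as ``the main difficulty'' --- but it is the entire arithmetic content of the theorem. Moreover the parenthetical fallback (Hilbert irreducibility applied to ``a pencil through'' a known degree-$9$ realization) does not work as stated: HIT produces specializations whose group equals the \emph{generic} group of the pencil, which for a generic pencil is $S_9$, and the specializations with the smaller group $\AGL_2(3)$ form a thin set about which HIT says nothing; Malle's families are precisely parametric families whose generic group is already $\AGL_2(3)$, which is what makes the paper's citation legitimate. Your main route is salvageable with concrete input: fix one explicit $E/\Q$ with surjective mod-$3$ representation and rank at least $1$, certify $P_0\notin 3E(\Q(E[3]))$ by reducing at a prime $p$ splitting completely in $\Q(E[3])$ (so $E[3]\subseteq E(\F_p)$) at which $P_0 \bmod p$ is not $3$-divisible, and then the points $nP_0$ with $3\nmid n$ already yield infinitely many suitable polynomials.
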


Even though the proof of Item (2) is not difficult -- the group ${\rm L}_3(2):2$ is not a transitive subgroup of $S_9$ or $S_{10}$ -- it raises the more interesting question of whether or not there exists any abelian fourfold over $\Q$ with this mod 2 representation.   We point the reader to Example \ref{agl(2,3)exm} below for an explicit instance of item (1) above.

In the second part of the paper we identify new classes of characteristic zero unisingular representations.  Here we focus on the symmetric and alternating groups and show that some of their Specht modules are unisingular over $\Z$.  Not all Specht modules for $S_n$ are unisingular over $\C$; for example, the standard representation $\mathcal{S}^{(n-1,1)}$ of $S_n$ is not unisingular once $n>1$, for any $n$, since the $n$-cycles have characteristic polynomial $(x^n-1)/(x-1)$, which does not vanish at $x=1$. Nevertheless, we can prove the following.

\begin{thm}\label{specht_thm}
%\begin{enumerate}
%\item Let $n \geq 5$.  
The Specht modules $\mathcal{S}^{(n-2,1,1)}$ and $\mathcal{S}^{(n-2,2)}$ are unisingular for $S_n$.
%is unisingular.
%\item Let $n \geq 6$ be even.  Then the Specht module $\mathcal{S}^{(n-2,2)}$ is unisingular.
%\item Let $n \geq 7$ be odd.  Then the class of elements of order $2n-4$ does not afford 1 as an eigenvalue. Therefore the Specht module $\mathcal{S}^{(n-2,2)}$ is not unisingular.
%\end{enumerate}
\end{thm}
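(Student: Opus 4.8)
The plan is to realize both modules as small functorial constructions on the standard representation $V:=\mathcal{S}^{(n-1,1)}$ and then exhibit the eigenvalue $1$ by inspecting the eigenvalues of permutations. Recall that the permutation module $\C^n$ decomposes as $\mathbf{1}\oplus V$, and that one has the classical isomorphisms of $S_n$-representations
\[
\mathcal{S}^{(n-2,1,1)}\;\cong\;{\textstyle\bigwedge^2 V},\qquad \mathcal{S}^{(n-2,2)}\;\cong\;\Sym^2 V\ominus V\ominus\mathbf{1}
\]
(both follow from Young's rule applied to $M^{(n-1,1)}$, $M^{(n-2,2)}$, $M^{(n-2,1,1)}$ together with $\C^n\otimes\C^n\cong M^{(n-1,1)}\oplus M^{(n-2,1,1)}$). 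A $\Z$-representation has $1$ as an eigenvalue of $\rho(g)$ precisely when $\det(1-\rho(g))=0$, which depends only on the characteristic polynomial over $\C$, so it suffices to track complex eigenvalues. Fix $g\in S_n$ of cycle type $\mu=(\mu_1,\dots,\mu_r)$: its eigenvalue multiset on $\C^n$ is $\{\zeta_{\mu_i}^{\,j}:1\le i\le r,\ 0\le j<\mu_i\}$, so its eigenvalue multiset $\Sigma_V$ on $V$ is the same with one copy of $1$ deleted. I record that $\Sigma_V$ consists of roots of unity and is stable under $z\mapsto z^{-1}$ (since $V$ is rational), $\abs{\Sigma_V}=n-1$, $m_1(V)=r-1$, and $m_{-1}(V)=b$, where $b$ denotes the number of even parts of $\mu$.

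For $\mathcal{S}^{(n-2,1,1)}=\bigwedge^2 V$ the multiplicity of the eigenvalue $1$ of $\rho(g)$ equals the number $N$ of unordered pairs of distinct positions in $\Sigma_V$ whose entries multiply to $1$, and I must show $N\ge 1$. This follows from a dichotomy on the self-inverse multiset $\Sigma_V$, which has at least $3$ entries when $n\ge 4$: either $1$ or $-1$ occurs in $\Sigma_V$ with multiplicity $\ge 2$, giving such a pair; or else at most two entries of $\Sigma_V$ lie in $\{1,-1\}$, so $\Sigma_V$ contains a non-real root of unity $\zeta$, and then $\bar\zeta=\zeta^{-1}\ne\zeta$ also lies in $\Sigma_V$, so $\{\zeta,\zeta^{-1}\}$ is the desired pair. (When $n=3$ this module is the sign character, which genuinely fails to be unisingular, so the statement should be read for $n\ge 4$.)

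For $\mathcal{S}^{(n-2,2)}$ I would first compute the multiplicity of the eigenvalue $1$ of $\rho(g)$ on $\Sym^2 V$: it is $N+m_1(V)+m_{-1}(V)$, where $N$ counts the off-diagonal products $\alpha_k\alpha_l$ ($k\ne l$) equal to $1$ and $m_1(V)+m_{-1}(V)$ counts the squares $\alpha_k^2=1$. Subtracting the contributions of the summands $\mathbf{1}$ and $V$, the multiplicity of $1$ on $\mathcal{S}^{(n-2,2)}$ equals $N+b-1$, so unisingularity of $\mathcal{S}^{(n-2,2)}$ at $g$ is equivalent to $N+b\ge 2$. Refining the count, $N=\binom{r-1}{2}+\binom{b}{2}+\sum_{\{\zeta,\zeta^{-1}\}}c_\zeta^2$, the last sum over the conjugate orbits of non-real eigenvalues in $\Sigma_V$ with $c_\zeta$ their common multiplicity, so the target inequality reads $\binom{r-1}{2}+\binom{b+1}{2}+\sum_\zeta c_\zeta^2\ge 2$. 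I would finish by a short case split on $r$ and the parity pattern of $\mu$: if $r\ge 4$ the first term is already $\ge 3$; if $r=3$ one needs only $b\ge 1$ or $\sum_\zeta c_\zeta^2\ge 1$, which fails solely for $\mu=(1,1,1)$ (hence only $n=3$); and if $r\le 2$ then, whenever $(n-2,2)$ is an honest partition, $\mu$ has a part $\ge 3$ (or $b\ge 2$), forcing $\sum_\zeta c_\zeta^2\ge 1$ and in fact $\sum_\zeta c_\zeta^2\ge 2$ when $b=0$, apart from the cycle types $(3)$ and $(3,1)$. The one surviving exception for which $(n-2,2)$ is an honest partition is $n=4$, $\mu=(3,1)$: there $\mathcal{S}^{(2,2)}$ is the two-dimensional representation of $S_4$, on which a $3$-cycle acts with eigenvalues the primitive cube roots of unity. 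Hence for $n\ge 5$ the module $\mathcal{S}^{(n-2,2)}$ is unisingular.

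I expect the last step — establishing $N+b\ge 2$ uniformly — to be the main obstacle. The delicate cases are the permutations with few cycles and few even cycles, above all the $n$-cycles of odd length and the elements of cycle type $(n-1,1)$, since these minimise the multiplicity of the eigenvalue $1$ on $\Sym^2 V$; there one must verify by hand that the eigenvalues of $V$ contribute at least two reciprocal pairs (or one reciprocal pair together with an eigenvalue $-1$), and it is precisely at this boundary that the small-$n$ behaviour, and the genuine exception at $S_4$, appear.
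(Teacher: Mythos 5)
Your proof is correct, but it takes a genuinely different route from the paper. The paper works inside the Specht modules themselves: for each cycle type it writes down an explicit tableau $t$, forms the $\sigma$-invariant sum $E_{\sigma,t}=\sum_j e_{\sigma^j t}$, and uses Garnir relations and row-equivalences to show this is a nonzero combination of standard polytabloids, thereby exhibiting an actual fixed vector over $\Z$ (with a table of such vectors covering the derangement cases). You instead invoke the identifications $\mathcal{S}^{(n-2,1,1)}\cong\bigwedge^2\mathcal{S}^{(n-1,1)}$ and $\Sym^2\mathcal{S}^{(n-1,1)}\cong \mathbf{1}\oplus\mathcal{S}^{(n-1,1)}\oplus\mathcal{S}^{(n-2,2)}$ and count the multiplicity of the eigenvalue $1$ directly from the eigenvalue multiset of a permutation on the standard representation; since the characteristic polynomials are integral, vanishing of $\det(1-\rho(g))$ over $\C$ gives unisingularity over $\Z$ and after reduction modulo any prime, which is all that is required. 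Your final case split ($r\ge 4$, $r=3$, $r\le 2$) is stated briefly but completes routinely along the lines you indicate. What each approach buys: the paper's argument is self-contained and produces explicit invariant vectors in every characteristic; yours is shorter, avoids the polytabloid bookkeeping, and yields the exact multiplicity of the eigenvalue $1$ (namely $N$ on $\bigwedge^2 V$ and $N+b-1$ on $\mathcal{S}^{(n-2,2)}$), which meshes well with the multiplicity computations of Section \ref{not_uni} and would adapt to $\mathcal{S}^{(n-2,2)'}=\mathcal{S}^{(n-2,2)}\otimes\mathcal{S}^{(1,\dots,1)}$ by tracking the eigenvalue $\operatorname{sgn}(g)$ instead. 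You also correctly flag the small-$n$ boundary, which the paper leaves implicit: $\mathcal{S}^{(1,1,1)}$ for $n=3$ and $\mathcal{S}^{(2,2)}$ for $n=4$ (where a $3$-cycle acts with eigenvalues the primitive cube roots of unity) genuinely fail, so the statement should be read for $n\ge 4$, respectively $n\ge 5$.
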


Computation suggests that many more classes of Specht modules $\mathcal{S}^\lambda$ are unisingular, but we will not investigate them further except to state that to determine precisely which ones are unisingular would be an interesting topic for further exploration.  We similarly expect that many classes of Specht modules can be proved \emph{not} to be unisingular and put forth the following conjecture, based on evidence we supply in Section \ref{not_uni}.

\begin{conjecture}
Let $n \geq 5$ be odd.  Then the class of elements of order $2n-4$ in the conjugate representation $\mathcal{S}^{(n-2,2)'} = \mathcal{S}^{(2,2,1,\dots,1)}$ does not afford 1 as an eigenvalue. Therefore the Specht module $\mathcal{S}^{(n-2,2)'}$ is not unisingular.
\end{conjecture}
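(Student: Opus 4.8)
The plan is to prove non-unisingularity by exhibiting a \emph{single} conjugacy class of order $2n-4$ on which $\mathcal{S}^{(n-2,2)'}$ has no eigenvalue $1$. Since $n\geq 5$ is odd, $n-2$ is odd and $\geq 3$, so $\operatorname{lcm}(n-2,2)=2(n-2)=2n-4$; hence there is an element $\sigma\in S_n$ of cycle type $(n-2,2)$ and order $m:=2n-4$, and this is the class in the statement. Because $\mathcal{S}^{(n-2,2)'}=\mathcal{S}^{(n-2,2)}\otimes\operatorname{sgn}$ and $\operatorname{sgn}(\sigma)=(-1)^{(n-3)+1}=(-1)^{n-2}=-1$, the operator by which $\sigma$ acts on $\mathcal{S}^{(n-2,2)'}$ is $-1$ times the operator by which it acts on $\mathcal{S}^{(n-2,2)}$. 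So it is enough to show that $-1$ is \emph{not} an eigenvalue of $\sigma$ on $\mathcal{S}^{(n-2,2)}$.

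I would then compute the multiplicity of the eigenvalue $-1$ by the standard formula $\frac{1}{m}\sum_{k=0}^{m-1}(-1)^{k}\,\chi^{(n-2,2)}(\sigma^{k})$, where $\chi^{(n-2,2)}$ is the character of $\mathcal{S}^{(n-2,2)}$, and the aim is to show this sum is $0$. To get the character values I would use Young's rule in the form $\chi^{(n-2,2)}=\pi_{(n-2,2)}-\pi_{(n-1,1)}$, where $\pi_\mu$ is the permutation character of $S_n$ on $\mu$-tabloids; explicitly, $\pi_{(n-1,1)}(g)$ is the number of fixed points of $g$ and $\pi_{(n-2,2)}(g)=\binom{a}{2}+b$ with $a$ the number of fixed points of $g$ and $b$ the number of $2$-cycles of $g$. (Alternatively one could run Murnaghan--Nakayama directly.) Everything is then controlled by the cycle type of $\sigma^{k}$: the $2$-cycle raised to the $k$-th power is a $2$-cycle for $k$ odd and the identity for $k$ even, while the odd-length $(n-2)$-cycle raised to the $k$-th power has fixed points only when $(n-2)\mid k$ and never has a $2$-cycle.

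Carrying this out gives, for $0\leq k<m$, the values $\chi^{(n-2,2)}(\sigma^{0})=\tfrac{n(n-3)}{2}$; $\chi^{(n-2,2)}(\sigma^{n-2})=\tfrac{(n-3)(n-4)}{2}$ (note $n-2$ is odd, so this index is odd); $\chi^{(n-2,2)}(\sigma^{k})=-1$ for the remaining even $k$; and $\chi^{(n-2,2)}(\sigma^{k})=1$ for the remaining odd $k$. Among the indices $0\leq k<m=2(n-2)$ there are $n-2$ even ones (one of which is $k=0$) and $n-2$ odd ones (one of which is $k=n-2$). Splitting $\sum_{k}(-1)^{k}\chi^{(n-2,2)}(\sigma^{k})$ by parity, the even indices contribute $\tfrac{n(n-3)}{2}+(n-3)(-1)=\tfrac{(n-2)(n-3)}{2}$ and the odd indices contribute $-\bigl(\tfrac{(n-3)(n-4)}{2}+(n-3)\bigr)=-\tfrac{(n-2)(n-3)}{2}$, so the total is $0$. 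Hence $-1$ is not an eigenvalue of $\sigma$ on $\mathcal{S}^{(n-2,2)}$, i.e.\ $1$ is not an eigenvalue of $\sigma$ on $\mathcal{S}^{(n-2,2)'}$, and therefore $\mathcal{S}^{(n-2,2)'}$ is not unisingular.

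The computation is entirely elementary, so the ``hard part'' is only organizational: one must check that an element of cycle type $(n-2,2)$ really lies in $S_n$ and has order exactly $2n-4$ (this is where oddness of $n$ is used, to rule out the value being absorbed into a single cycle), and keep careful track of exactly which powers $\sigma^{k}$ acquire fixed points or new $2$-cycles so that the four character values above are correct. If one wanted the sharper assertion that \emph{every} conjugacy class of order $2n-4$ avoids the eigenvalue $1$ (there can be more than one such class when $n-2$ is not prime, e.g.\ a class of cycle type $(6,5,1^{\,n-11})$ when $15\mid(n-2)$), one would repeat the same character bookkeeping for each admissible cycle type; but this is unnecessary for the stated conclusion that $\mathcal{S}^{(n-2,2)'}$ fails to be unisingular.
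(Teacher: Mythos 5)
Your argument is correct, and it is worth noting that the paper does not actually prove this statement: it is left as a conjecture, supported only by computational evidence (for odd $n$ with $5\le n\le 23$, the paper decomposes the permutation module $\mathcal{M}^{(2,2,1,\dots,1)}$ in \textsf{Magma}, compares multiplicities of $(x-1)$ in characteristic polynomials of the constituents, and tabulates the nonzero values of the characteristic polynomial at $1$ on the class in question). Your route is a genuine proof of the claimed non-unisingularity, valid for all odd $n\ge 5$: twisting by the sign character converts the question into whether $-1$ is an eigenvalue of an element $\sigma$ of cycle type $(n-2,2)$ on $\mathcal{S}^{(n-2,2)}$, and the multiplicity formula $\frac{1}{m}\sum_{k=0}^{m-1}(-1)^k\chi^{(n-2,2)}(\sigma^k)$ together with $\chi^{(n-2,2)}=\pi_{(n-2,2)}-\pi_{(n-1,1)}$ reduces everything to elementary fixed-point and $2$-cycle counts for the powers $\sigma^k$; I checked your four character values and the parity bookkeeping (including $\chi^{(n-2,2)}(\sigma^{n-2})=\tfrac{(n-3)(n-4)}{2}$, which uses that $n-2$ is odd so that $\sigma^{n-2}$ is the bare transposition), and the sum does vanish, consistent with the paper's data (e.g.\ for $n=5$ your eigenvalue analysis reproduces the tabulated value $6$ for $\det(1-\rho(\sigma))$). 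What your approach buys is a uniform, closed-form character computation replacing machine verification; what it does not address is the remaining half of the paper's Section~5 formulation (that \emph{every other} class does afford $1$ as an eigenvalue, and the conjectured value $2^{k-1}(2k-1)$ of the characteristic polynomial at $1$ on this class), but that part is not in the statement you were asked to prove. Your closing caveat about possible additional classes of order $2n-4$ when $n-2$ is composite is also apt, since the paper's own phrasing in Section~5 pins the class down as the one of cycle type $(n-2,2)$, which is exactly the class you treat, and a single bad class already kills unisingularity.
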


Specht modules are irreducible over $\C$ and defined over $\Z$, so it makes sense to reduce them modulo any prime $\ell$, and in particular $\ell=2$. However, they might not remain irreducible upon reduction.  For example, when $n=5$, the module $\mathcal{S}^{(3,1,1)}$ is unisingular of dimension 6, and irreducible in characteristic 0.  Modulo 2, the representation is still (clearly) unisingular, but decomposes into a sum of two 1-dimensional (trivial) factors and a 4-dimensional factor.  

So this raises the question -- which of the unisingular Specht modules of Theorem \ref{specht_thm} can be realized by hyperelliptic Jacobians?  In order to achieve this, we would require
\begin{enumerate}
\item an even-dimensional, unisingular, irreducible representation $\mathcal{S}^\lambda$ of $S_n$; write $\dim \mathcal{S}^\lambda = f^\lambda = 2d$, and
\item $\mathcal{S}^\lambda$ to be irreducible modulo 2 (see \cite[Main Theorem]{james_mathas} for precise conditions on when this occurs), and
\item $f(x) \in \Q[x]$ an irreducible polynomial of degree $2d+1$ or $2d+2$ with Galois group $S_n$, and
\item the permutation representation $S_n \to {\rm Perm}(2d+1 \text{ or } 2) \to \Sp_{2d}(\F_2)$ to coincide with the reduction modulo 2 of $\mathcal{S}^\lambda$. 
\end{enumerate}
In fact, we expect that these items are \emph{never} simultaneously satisfied, and give some evidence for this expectation at the end of the paper.  If so, this would mean that while unirealizations of the Specht modules may exist over $\Q$, even for Jacobians, they cannot be realized by \emph{hyperelliptic} Jacobians.

However, this gives a potential window into determining minimal fields of definitions for unirealizing these Specht modules and other unisingular representations by hyperelliptic Jacobians.  Namely, if we consider group extensions $S_n.Q$ and field diagrams
\[
\xymatrix{
L \ar@{-}^{S_n}[d] \\
K \ar@{-}^{Q}[d] \\
\Q
}
\]
and ask for polynomials $f(x)$ with Galois group $S_n.Q$ of the appropriate degree to unirealize $S_n$ by $J_f$ over the base change to $K$. We will not pursue this question in this paper and will be content to simply determine new families of unisingular representations of the symmetric group.

%Thus, one can investigate extensions $N.S_n$ that \emph{are} transitive and attempt to construct hyperelliptic Jacobians unirealizing $S_n$ over an $N$-extension of $\Q$, while minimizing the cardinality of $N$. 

\subsection{Notation and Conventions} We have already introduced much of the notation we will use in the paper.  In general, we use \textrm{Atlas} \cite{Atlas} notation for finite groups, \emph{e.g.}~${\rm L}_n(q)$ for the finite group $\PSL_n(\F_q)$.  We will make extensive use of the LMFDB \cite{lmfdb} database for explicit polynomial and/or field extensions.  In each instance we provide a link to the LMFDB to the relevant object.  Similarly we rely on the online Atlas at \href{http://brauer.maths.qmul.ac.uk/Atlas/v3/}{http://brauer.maths.qmul.ac.uk/Atlas/v3/} for explicit matrix representations of large degree.  

\subsection{Acknowledgments} We would like to thank Gunter Malle and Alexandre Zalesski for helpful conversations.  We also thank Drew Sutherland for access to a version of \textsf{Magma} allowing us to perform some of  the lengthy eigenvalue computations in Section \ref{not_uni}.

\section{Permutation Representations and Jacobians} \label{perm}

For a detailed background on the 2-torsion of Jacobians, we refer the reader to \cite[\S 1.2]{yelton_sym}; in this section we will just give a brief sketch of how one associates to a hyperelliptic curve the 2-torsion subgroup of its Jacobian.  All of this holds in far more generality, but in this paper we will work exclusively over $\Q$ for simplicity. 

Let $C_f/\Q$ be the hyperelliptic curve defined by the squarefree polynomial $f(x)$ of degree $d \geq 3$:
\[
C_f: y^2 = f(x).
\]
Then $C_f$ has genus $g = \lfloor (d-1)/2 \rfloor$ and its Jacobian $J_f := \Jac(C_f)$ is an abelian variety of dimension $g$, also defined over $\Q$.  Let $K_f$ be the splitting field over $\Q$ of $f$ and $\Gal_\Q(f)$ its Galois group; we allow for intransitive Galois groups for the moment.

If $A$ is a principally polarized abelian variety of dimension $g$ defined over $\Q$, then the 2-torsion subgroup $A[2]$ of $A$ has the structure $A[2] \simeq (\Z/2)^{2g}$ as a Galois module, yielding the  Galois representation (the mod 2 representation)
\[
\overline{\rho}_{A,2}:\Gal(\overline{\Q}/\Q) \to \GL_{2g}(\F_2).
\]
The Weil pairing is an alternating, Galois-equivariant pairing on $A[2]$, whence the image of $\overline{\rho}_{A,2}$ lies in $\Sp_{2g}(\F_2)$. 

Via divisors, $C_f$ embeds into  $J_f$ and the 2-torsion subgroup of $J_f$ is identified with the Weierstrass points of $C_f$.  This construction shows us that 
\[
\im \overline{\rho}_{J_f,2} \simeq \Gal(K_f/\Q) \subseteq S_{d} \subseteq \Sp_{2g}(\F_2).
\]
By comparing orders, we see that once $g \geq 3$ the mod 2 representation of a hyperelliptic Jacobian cannot be surjective onto $\Sp_{2g}(\F_2)$.  For the explicit details on how one goes from the natural permutation representation on the roots of $f$ to the symplectic embedding, see \cite[Prop. 2.1]{yelton_sym}.

Therefore, if we are interested in constructing interesting (\emph{e.g.}~absolutely simple) examples of abelian varieties over $\Q$   which have an even number of points modulo all but finitely many $p$, but that no member of its $\Q$-isogeny class has a rational point of order 2, then one approach would be to follow the procedure from the Introduction:
\begin{enumerate}
\item identify groups $G$ with irreducible, unisingular, symplectic ($2g$-dimensional), representations $\rho$ over $\F_2$, and 
\item determine irreducible polynomials $f \in \Q[x]$ of degree $d = 2g+1$ or $2g+2$ with $\Gal_\Q(f) \simeq G$, such that the composition 
\[
G \to S_{d} \hookrightarrow \Sp_{2g}(\F_2)
\]
of the permutation representation with the natural inclusion into the symplectic group $\Sp_{2g}(\F_2)$ coincides with $\rho$. 
\end{enumerate}
If we can find a suitable polynomial $f$ that achieves both items, then recall from the Introduction that we say $J_f$ unirealizes $G$.   One cautionary remark is in order: it does not immediately follow that if $f$ is an irreducible polynomial then the $2g$-dimensional symplectic representation of $G$ is irreducible.  For example, in \cite[Prop.~2.6]{yuri}, Zarhin shows that if $n$ is \emph{prime}, and 2 is a primitive root modulo  $n$,  and $f(x)$ is irreducible of degree $n$, then the associated mod 2 representation on the Jacobian $J_f$ is irreducible.  (If the permutation representation is, additionally, 2-transitive, then the mod 2 representation is absolutely irreducible.)  While one may be able to prove more general relationships between the irreducibility of $f$ and the irreducibility of the associated symplectic representation, we will only give \emph{ad hoc} reasoning that our examples yield irreducible mod 2 representations.

%Say something about the permutation representation and that it goes into $\Sp_m(2)$.    Review very briefly the represnetation on the Weierstrass points.  But then state that it suffices to do this just from the point of view of representation theory.

%\jc{State that $\AGL$ is 2-ply but not 3-ply transitive.  State that $\PGL_2(q)$ is sharply 3-transitive. 

%Prove that 2-transtivie groups in this permulation representation give irreps? }

\section{Unirealization of $\AGL_2(3)$}

Our first result concerns the smallest-sized groups in the families of unisingular representations introduced in \cite{cz}.

\begin{thm}
\begin{enumerate}
\item There exist infinitely many polynomials $f \in \Q[x]$ of degree 9 such that the hyperelliptic Jacobian $J_f$ unirealizes $\AGL_2(3)$.
%\item There exist polynomials $f \in \Q[x]$ of degree 20 such that the hyperelliptic Jacobian $J_f$ unirealizes $\PGL_2(19)$.
\item 
The 8-dimensional, irreducible, unisingular, $\F_2$-representation of ${\rm L}_3(2):2$ does not occur as the mod 2 representation of a genus 4 hyperelliptic Jacobian.
\end{enumerate}
\end{thm}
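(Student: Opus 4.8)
The plan is to treat the two items separately, since they are genuinely different in character: Item (1) is a concrete construction/existence statement over $\Q$, while Item (2) is a soft group-theoretic non-existence statement.

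For Item (1), the strategy is to exhibit an explicit family. Since $\AGL_2(3)$ has order $2^4 \cdot 3^3 = 432$ and acts naturally on the $9$ points of the affine plane $\A^2(\F_3)$, I would look for a degree-$9$ polynomial $f \in \Q[x]$ whose Galois group is $\AGL_2(3)$ under this degree-$9$ permutation action. First I would consult the LMFDB for a number field of degree $9$ with Galois group $\AGL_2(3)$ and extract a defining polynomial; then to get \emph{infinitely many} such $f$ I would either twist by a suitable parameter (e.g.\ a rational specialization of a $1$-parameter family realizing $\AGL_2(3)$, which exists by standard inverse Galois results over $\Q(t)$ — the group is solvable) or apply a Hilbert irreducibility argument to a generic polynomial. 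With a concrete $f$ in hand, the two remaining things to verify are (a) unisingularity of the resulting symplectic representation $\varphi\colon \AGL_2(3)\hookrightarrow S_9\hookrightarrow \Sp_8(\F_2)$, and (b) its irreducibility. For (a), note that since $g=4$ and $d=9$ is odd, the representation $\varphi$ is exactly the reduction mod $2$ of the degree-$9$ permutation module modulo its all-ones line/trivial quotient, i.e.\ the heart of the permutation module; that this particular $8$-dimensional $\F_2$-representation of $\AGL_2(3)$ is unisingular is precisely (the $q=3$ case of) the family from \cite[Thm.~1.7]{cz}, so I would simply cite that. For (b), irreducibility of the heart of the permutation module over $\F_2$ can be checked directly: I would verify that the $9$-point action is $2$-transitive (it is, since $\AGL_2(3)$ acts $2$-transitively on $\A^2(\F_3)$), which forces the heart to be irreducible over $\F_2$ unless $2 \mid 9$ — and $2\nmid 9$ — so $\varphi$ is irreducible (in fact absolutely irreducible), along the lines of Zarhin's criterion recalled in Section~\ref{perm}. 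Finally, since twisting/specialization does not change the Galois group or the mod $2$ action for the parameters in a Hilbert set, all $f$ in the family unirealize $\AGL_2(3)$.

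For Item (2), the argument is essentially the parenthetical remark in the statement. A genus-$4$ hyperelliptic Jacobian $J_f$ has $d=\deg f \in \{9,10\}$, and by the discussion in Section~\ref{perm} its mod $2$ image is $\Gal_\Q(f)\subseteq S_d$ acting through the permutation-to-symplectic construction. For this image to \emph{be} (conjugate in $\Sp_8(\F_2)$ to) the Steinberg image of ${\rm L}_3(2){:}2\cong\PGL_2(7)$, in particular $\Gal_\Q(f)$ would have to be a subgroup of $S_9$ or $S_{10}$ isomorphic to ${\rm L}_3(2){:}2$ whose permutation action induces the Steinberg representation mod $2$. So it suffices to show ${\rm L}_3(2){:}2$ has no faithful transitive permutation representation of degree $9$ or $10$ — equivalently, no subgroup of index $9$ or $10$ — and moreover that even its intransitive actions of degree $\le 10$ cannot induce the $8$-dimensional Steinberg module. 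The order of ${\rm L}_3(2){:}2$ is $2\cdot 168 = 336$, so an index-$9$ subgroup would have order $336/9$, which is not an integer, and an index-$10$ subgroup would have order $336/10$, again not an integer; hence there is no transitive action of degree $9$ or $10$ at all. For the intransitive case: the mod $2$ Steinberg representation is absolutely irreducible of dimension $8$, so it cannot be a summand of (the heart of) a decomposable permutation module of total degree $\le 10$ — any such module has composition factors of dimension at most $9$ and coming from orbits of size $\le 9$, and a transitive constituent large enough to carry an $8$-dimensional irreducible factor would itself need degree $\ge 9$, returning us to the transitive case just excluded. Therefore no genus-$4$ hyperelliptic Jacobian over $\Q$ (indeed over any field of characteristic $\neq 2$) can have this mod $2$ representation.

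The main obstacle is entirely in Item (1): producing the infinite family rigorously. Checking unisingularity and irreducibility for a single $f$ is routine (or a citation to \cite{cz}), and Item (2) is a two-line order count. The real work is (i) locating an explicit degree-$9$ polynomial with Galois group $\AGL_2(3)$ in the correct $9$-point action — rather than, say, a different degree-$9$ faithful action — and (ii) packaging the passage from one example to infinitely many, i.e.\ setting up the $\Q(t)$-family or the Hilbert irreducibility specialization carefully enough that the Galois group and the mod $2$ image are constant along it. I would expect to do (i) via LMFDB and (ii) via a clean statement that $\AGL_2(3)$, being solvable, is realized regularly over $\Q(t)$ (Shafarevich), then invoke Hilbert irreducibility to get a thin-complement family of specializations $t\in\Q$, each of which gives an $f$ with the desired properties; an explicit instance is recorded in Example~\ref{agl(2,3)exm}.
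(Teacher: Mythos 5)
Your Item (2) is correct and is exactly the paper's argument: $|{\rm L}_3(2){:}2|=336$ is divisible by neither $9$ nor $10$, so the group has no transitive action of degree $9$ or $10$ and hence is not the Galois group of an irreducible polynomial of degree $9$ or $10$ over $\Q$; your extra remarks about intransitive actions go slightly beyond what the paper records, and are fine.

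Item (1) is where the genuine gaps are, and they sit precisely at the steps you defer or justify incorrectly. First, the crux is the existence of infinitely many degree-$9$ polynomials over $\Q$ with Galois group $\AGL_2(3)$ in its natural $9$-point action. Shafarevich's theorem realizes solvable groups over $\Q$ but gives neither a regular $\Q(t)$-realization nor any control over the degree of a generating polynomial or the permutation action, so ``standard inverse Galois results over $\Q(t)$'' do not produce the family; the paper instead invokes Malle's explicit two-parameter families of degree-$9$ polynomials with group $\AGL_2(3)$ \cite[\S 7]{malle1}, and then observes that $S_9$ has a unique conjugacy class of subgroups isomorphic to $\AGL_2(3)$ (and $\Sp_8(\F_2)$ a unique class of subgroups isomorphic to $S_9$), which is what disposes of your worry about landing in ``the correct $9$-point action.'' Second, your unisingularity citation is mis-attributed: $\AGL_2(3)$ is not $\PGL_2(3)$ and is not a member of the $\PGL_2(q)$ family of \cite[Thm.~1.7]{cz} (that family requires $3\mid(q-1)$, and its smallest member is $\PGL_2(7)={\rm L}_3(2){:}2$, the \emph{other} maximal fixed-point subgroup); the relevant statement is a different theorem of \cite{cz} (Thm.~4.1 in the paper's citation), which asserts that $\AGL_2(3)\hookrightarrow S_9\to\Sp_8(\F_2)$ is absolutely irreducible and unisingular. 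Third, your irreducibility criterion --- that $2$-transitivity forces the heart of the permutation module to be irreducible over $\F_2$ whenever $2\nmid n$ --- is false in general: $\PSL_2(7)$ acting $2$-transitively on the $7$ points of the Fano plane has heart $3\oplus 3'$ over $\F_2$, and $A_7$ on $15$ points gives another counterexample; Zarhin's criterion as recalled in Section \ref{perm} requires the degree to be \emph{prime} with $2$ a primitive root, which fails for degree $9$. The irreducibility for $\AGL_2(3)$ is true, but it must come from \cite{cz} (or a direct Clifford-theoretic/computational check using the translation subgroup), not from $2$-transitivity.
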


\begin{proof}
For item 1, we refer to the constructions of Malle in \cite[\S 7]{malle1}.  There he exhibits two different 2-parameter families of rational polynomials with Galois group $\AGL_2(3)$.  For infinitely many specializations of both parameters, these polynomials are irreducible of degree 9 and have Galois group $\AGL_2(3)$ over $\Q$.  The group $\AGL_2(3)$ is maximal in $S_9$ by the O'Nan-Scott Theorem, and the inclusion $\AGL_2(3) \hookrightarrow S_9 \to \Sp_8(2)$ is absolutely irreducible and unisingular by \cite[Thm.~4.1]{cz}.  Since $\Sp_8(2)$ has a {unique} (up to conjugacy) subgroup isomorphic to $S_9$ and $S_9$ has a unique (up to conjugacy) subgroup isomorphic to $\AGL_2(3)$, the specializations of Malle's polynomials give precisely this fixed-point subgroup of $\Sp_8(2)$.   By the  process outlined above, there exists a hyperelliptic Jacobian of genus 4 defined over $\Q$ unirealizing $\AGL_2(3)$.

\bigskip

For item 2, we simply observe that ${\rm L}_3(2):2 \simeq \PGL_2(7)$ is not a transitive subgroup of $S_9$ or $S_{10}$, and so is not the Galois group of an irreducible degree 9 or 10 polynomial over $\Q$.
\end{proof}

For number theoretic constructions it is often desirable to have examples ramified at very few, small, primes.  For the family
\begin{align*}
g_{a,t}(x) &= x^9 + (6a - 21)x^8 + (15a^2 - 102a + 171)x^7 + (18a^3 - 195a^2 + 648a - 675)x^6 \\
&\ + (9a^4 - 168a^3 + 903a^2 - 1890a + 1296)x^5 + (-51a^4 + 516a^3 - 1773a^2 + 2430a \\
& \ - 972)x^4  + (-a^6 + 6a^5 + 81a^4 - 604a^3 + 1350a^2 - 972a + t)x^3 + (3a^6 - 12a^5 \\
& \ - 57a^4 + 282a^3 - 324a^2 - 3t)x^2 + (-3a^6 + 6a^5 + 18a^4 - 36a^3)x + a^6,
\end{align*}
of Malle \cite{malle1} with Galois group $\AGL_2(3)$, we have
\begin{align} \label{disc_form_1}
\disc(g_{a,t}(x)) = -2^83^9t^4a^6r(a,t)^3,
\end{align}
where 
\begin{align*} \label{disc_form_2}
r(a,t) &= a^{12} - 12a^{11} + 96a^{10} - 520a^9 + 2166a^8 - 6960a^7 - 2a^6t +        17524a^6 \\& \ + 12a^5t - 34200a^5 + 48a^4t + 49329a^4 - 272a^3t -        49572a^3 + 342a^2t \\
& \ + 26244a^2 - 108at + t^2 + 108t
\end{align*}
is irreducible in $\Q[a,t]$.  By (\ref{disc_form_1}) it is only at specializations $(a,t)$ that are powers of 2 and 3 that $\disc (g_{a,t}(x))$ can be solely divisible by the primes 2 and 3.  Finding such rational specializations amounts to determining the rational points on the algebraic curves defined by 
\[
r(a,t) = \pm 2^\alpha 3^\beta. 
\]
For a fixed value of $\pm 2^\alpha3^\beta$, Riemann-Hurwitz shows that the curve has genus $>$ 1, hence by Faltings will have finitely many rational points.  At these rational points we get interesting cases of hyperelliptic Jacobians.  We are content to highlight an illustrative example.

\begin{exm} \label{agl(2,3)exm}
The polynomial 
\[
g_{1,-32}(x) = x^9 - 15x^8 + 84x^7 - 204x^6 + 150x^5 + 150x^4 - 172x^3 - 12x^2 - 15x + 1
\]
defines a number field isomorphic to the one with LMFDB label \href{https://www.lmfdb.org/NumberField/9.3.82556485632.1}{9.3.82556485632.1} with class number 1 and discriminant $-\,2^{22}\cdot 3^{9}$.  The genus 4 hyperelliptic Jacobian defined by $y^2 = g_{1,-32}(x)$ is ramified only at the primes 2 and 3, and unirealizes $\AGL_2(3)$.  

In particular, $J_f$ enjoys the properties of being defined over $\Q$, having an even number of points over $\F_p$ for all $p \geq 5$, and no member of the $\Q$-isogeny class of $J_f$ has a global 2-torsion point. 

To conclude this example, we mention that in the paper \cite{cz} we showed that the only subgroups $G$ of $\AGL_2(3)$ that act irreducibly on the space $\F_2^8$ are 
\[
{\rm A}\Gamma {\rm L}_1(9),\ \AGL_1(9), \PSU_3(2), \text{and} \ {\rm ASL}_2(3),
\]
of orders 144, 72, 72, and 216, respectively.  For example, the specialization $g_{1,1}(x)$ has Galois group ${\rm A}\Gamma {\rm L}_1(9)$, showing that the hyperelliptic Jacobian defined by $g_{1,1}(x)$ unirealizes an even smaller irreducible fixed-point subgroup of $\Sp_{8}(2)$ than $\AGL_2(3)$. 
\end{exm}

\begin{rmk}
The next-smallest group in the unisingular family of $\PGL_2(q)$ is $\PGL_2(19)$.  We point out that there are five irreducible polynomials of degree 20 with Galois group $\PGL_2(19)$ over $\Q$ currently indexed in the LMFDB:

\begin{itemize}
\item \href{https://www.lmfdb.org/NumberField/20.2.1339432011824591977742384617725412507648.1}{20.2.1339432011824591977742384617725412507648.1} 
\item \href{https://www.lmfdb.org/NumberField/20.2.2288697943298220011370049010943499874304.1}{20.2.2288697943298220011370049010943499874304.1} 
\item \href{https://www.lmfdb.org/NumberField/20.0.14595833152648338774900083206236247150592.1}{20.0.14595833152648338774900083206236247150592.1} 
\item \href{https://www.lmfdb.org/NumberField/20.0.180218116213002899035577493449527785489956864.1}{20.0.180218116213002899035577493449527785489956864.1} 
\item \href{https://www.lmfdb.org/NumberField/20.2.206007596521214410095208558252435839890349094339.1}{20.2.206007596521214410095208558252435839890349094339.1}
\end{itemize}

\noindent While the 20-dimensional $\F_2$-representation of $\PGL_2(19)$ was shown to be unisingular in \cite[Thm.~1.7]{cz}, these polynomials define 9-dimensional hyperelliptic Jacobians.  Hence, these symplectic embeddings are $\PGL_2(19) \hookrightarrow S_{18} \to \Sp_{18}(\F_2)$.  The group $\PGL_2(19)$ has three 2-modular irreducible representations defined over $\F_2$, of degrees 1, 18, and 20.  The 18-dimensional representation is not unisingular since the characteristic polynomial of the elements of order 19 is cyclotomic of degree 18,  which does not have 1 as a root in $\F_2$.  

We would therefore require a degree 21 or 22 polynomial with Galois group $\PGL_2(19)$ in order to unirealize $\PGL_2(19)$.  However, $\PGL_2(19)$ is not a transitive subgroup of $S_{21}$ or $S_{22}$.  Thus, $\PGL_2(19)$ cannot be unirealized by a hyperelliptic Jacobian over $\Q$.   This illustrates some of the difficulties involved with constructing hyperelliptic unirealizations.  
\end{rmk}

\section{Unisingular Representations of the Symmetric Group}

Now we turn to the symmetric and alternating groups and prove that certain families of Specht modules are always unisingular, over any field.  While this may be well known to experts, we were unable to find this specific result in the literature.  

Let $n$ be a positive integer, $S_n$ the symmetric group on $n$ letters, $\lambda$ a partition of $n$, and $\mathcal{S}^\lambda$ the associated Specht module for $S_n$.  Since $\mathcal{S}^\lambda$ is defined over $\Z$, it makes sense to reduce $\mathcal{S}^\lambda$ modulo $p$ and consider it as a (possibly reducible) representation of $S_n$ over $\F_p$. 

We now focus on two partitions of $n$ in particular: $(n-2,1,1)$ and $(n-2,2)$.  It is well known that 
\[
\dim \mathcal{S}^{(n-2,2)} = n(n-3)/2 \qquad \text{and} \qquad \dim \mathcal{S}^{(n-2,1,1)} = \dim \mathcal{S}^{(n-2,2)} + 1.
\]
Our main theorem, which we quote from the Introduction, is on the unisingularity of these Specht modules.

\begin{thm}
The Specht modules $\mathcal{S}^{(n-2,1,1)}$ and $\mathcal{S}^{(n-2,2)}$ are unisingular for $S_n$.
\end{thm}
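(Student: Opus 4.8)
The plan is to show that every element $g \in S_n$ has $1$ as an eigenvalue on $\mathcal{S}^{(n-2,2)}$ and on $\mathcal{S}^{(n-2,1,1)}$ by computing the value of the character $\chi^\lambda$ on an arbitrary conjugacy class and comparing it to the character of a nearby permutation module, or equivalently by counting fixed points combinatorially. Concretely, I would use the fact that $1$ is an eigenvalue of $g$ acting on a representation $V$ if and only if the multiplicity of the trivial representation in $V$ restricted to $\langle g\rangle$ is positive; writing $g$ as a product of cycles of lengths $\mu_1,\dots,\mu_k$ (a partition $\mu$ of $n$), this multiplicity equals $\frac{1}{\ord(g)}\sum_{j \mid \ord(g)} \chi^\lambda(g^j)$, so it suffices to show this averaged sum is strictly positive for every cycle type $\mu$. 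Since everything here is happening over $\Z$ (Specht modules are $\Z$-forms and the eigenvalue $1$ over $\C$ forces a fixed vector in the reduction mod any prime), proving unisingularity over $\C$ — equivalently, over $\Z$ — gives it over every field, as claimed.

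The key computational input is the explicit decomposition of the small permutation modules. Let $M^{(n-1,1)}$ be the natural permutation module $\F^n$ and $M^{(n-2,2)}$, $M^{(n-2,1,1)}$ the permutation modules on $2$-subsets and on ordered pairs of distinct points respectively. One has the standard branching identities $M^{(n-2,2)} \cong \mathbf{1} \oplus \mathcal{S}^{(n-1,1)} \oplus \mathcal{S}^{(n-2,2)}$ and $M^{(n-2,1,1)} \cong \mathbf{1}^{\oplus 2} \oplus \mathcal{S}^{(n-1,1)\,\oplus 2} \oplus \mathcal{S}^{(n-2,2)} \oplus \mathcal{S}^{(n-2,1,1)}$ (valid for $n$ large enough; small $n$ handled by hand). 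For a permutation $g$ of cycle type $\mu$, the number of fixed points of $g$ on $2$-subsets, call it $a_2(\mu)$, and on ordered pairs, $a_{1,1}(\mu) = $ (number of fixed ordered pairs), are elementary to write down: $a_{1,1}(\mu) = (\#\text{fixed points of }g)^2 + \sum_i \mu_i\,[\mu_i \text{ even?}]\cdots$ — more cleanly, the number of fixed pairs $(x,y)$, $x\ne y$, under $g$ equals $\sum_{d} (\text{number of length-}d\text{ cycles})\cdot(\text{something})$; I would just record that $a_{1,1}(\mu) = (\sum_i \mu_i \cdot 0^{\mu_i-1})^2 + \#\{(x,y): g x = y, x\ne y,\ y\in \text{orbit of }x\}$ and similarly for $a_2$. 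Then the multiplicity of $\mathbf 1$ in $\mathcal{S}^\lambda|_{\langle g\rangle}$ is obtained by subtracting off the $\mathbf 1$ and $\mathcal{S}^{(n-1,1)}$ contributions, whose fixed-subspace dimensions under $\langle g\rangle$ are $1$ and $(\text{number of cycles of }g) - 1$ respectively (a classical fact). So the whole argument reduces to a single inequality: the averaged fixed-point count on $2$-subsets (resp.\ ordered pairs) exceeds $1 + (c-1) = c$ (resp.\ $2 + 2(c-1) = 2c$), where $c = c(g)$ is the number of cycles of $g$.

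The main obstacle is getting this inequality to be \emph{strict} uniformly over all cycle types, including the degenerate-looking ones — e.g.\ an $n$-cycle, or a fixed-point-free involution, or elements of order $2n-4$ like those flagged in the Conjecture — where the fixed-point counts are smallest and one must be careful that no cancellation in the averaged sum $\frac{1}{\ord(g)}\sum_{j\mid\ord(g)}\chi^\lambda(g^j)$ pushes it down to $0$. I expect to handle this by splitting into cases on the cycle type of $g$ (one large cycle vs.\ several, presence or absence of fixed points, all parts of equal even length, etc.) and in each case exhibiting an explicit $\langle g\rangle$-fixed $2$-subset or pair that survives in the Specht quotient — for instance, if $g$ has two cycles of the same length $d$, the "diagonal" $d$ pairs swapped between them give fixed ordered pairs beyond what $\mathbf 1 \oplus \mathcal{S}^{(n-1,1)}$ accounts for. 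An alternative, cleaner route I would try in parallel is to use the known closed form for $\chi^{(n-2,2)}$ and $\chi^{(n-2,1,1)}$ on a class $\mu$ in terms of the number of fixed points $f_1$, the number of $2$-cycles $f_2$, etc.\ (these characters are low-degree polynomials in the "cycle-counting" statistics by the theory of the symmetric group character polynomials), and then directly verify that the resulting averaged sum over $j \mid \ord(g)$ is a manifestly positive expression. Whichever route, the endgame is the same elementary but slightly delicate positivity check.
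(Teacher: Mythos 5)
Your overall strategy (count $\langle g\rangle$-orbits on $2$-subsets and on ordered pairs, then subtract the trivial and standard constituents) is a genuinely different route from the paper, which instead exhibits, for each cycle type, an explicit fixed vector $E_{\sigma,t}=\sum_j e_{\sigma^j t}$ and shows it is nonzero by expanding in standard polytabloids. Your route could work, but as written it has a concrete error in the $(n-2,1,1)$ case. The decomposition of the permutation module on ordered pairs is
\[
\mathcal{M}^{(n-2,1,1)} \;=\; \mathcal{S}^{(n)} \oplus 2\,\mathcal{S}^{(n-1,1)} \oplus \mathcal{S}^{(n-2,2)} \oplus \mathcal{S}^{(n-2,1,1)},
\]
so the trivial module occurs \emph{once}, not twice, and, crucially, $\mathcal{S}^{(n-2,2)}$ occurs and you have omitted it. Consequently your reduction ``number of $\langle g\rangle$-orbits on ordered pairs $>2c$'' is not a sufficient criterion: writing $m_{22}(g)$ for the multiplicity of the trivial module in $\mathcal{S}^{(n-2,2)}\!\downarrow_{\langle g\rangle}$, the correct condition is (orbits on ordered pairs) $\geq 2c + m_{22}(g)$, and $m_{22}(g)=$ (orbits on $2$-subsets) $-\,c$ is typically much larger than $1$ (for a transposition in $S_6$ it equals $6$). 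Equivalently, what you must prove is (orbits on ordered pairs) $-$ (orbits on $2$-subsets) $\geq c$, which in terms of the cycle lengths $\mu_1,\dots,\mu_c$ of $g$ reads $\sum_i\bigl(\lceil \mu_i/2\rceil-1\bigr)+\sum_{i<j}\gcd(\mu_i,\mu_j)\geq c$. That inequality is true and elementary to check, but it is not the inequality you stated, and with your stated threshold the argument does not establish a fixed vector in $\mathcal{S}^{(n-2,1,1)}$.

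Beyond that, the substantive content of the proof is still missing: the positivity checks (your ``slightly delicate'' case analysis over cycle types) are only promised, the fixed-point-count formulas you sketch for $a_2(\mu)$ and $a_{1,1}(\mu)$ are garbled rather than derived, and the averaging formula should run over all powers $g^j$, $0\leq j<\operatorname{ord}(g)$, not over $j\mid \operatorname{ord}(g)$. So the proposal identifies a viable and arguably more uniform alternative to the paper's polytabloid construction (which avoids Garnir calculus entirely and reduces everything to orbit counting in terms of cycle lengths), but in its present form it both uses an insufficient criterion for $\mathcal{S}^{(n-2,1,1)}$ and leaves the decisive inequalities unproved.
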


If $n$ is odd, then we conjecture below that only one class of elements fails to have 1 as an eigenvalue in the conjugate Specht module $\mathcal{S}^{(n-2,2)'} = \mathcal{S}^{(n-2,2)} \otimes \mathcal{S}^{(1,\dots,1)}$.  Moreover, restricting these representations to $A_n$ yields irreducible, unisingular representations since the class in question consists of odd permutations. 

%\jc{say something about this result was alluded to in Katz, Tiep, Guralnick.  Say that Katz asks for a hyperelliptic realization in his talk?}

In the proof below we use the standard notions of tableaux, tabloids, and polytabloids, as developed comprehensively in \cite{sagan}.  We adopt their notation as well: if $t$ is a tableau of shape $\lambda$, write ${\lbrace t \rbrace}$ and $e_t$ for the associated tabloid and polytabloid, respectively.  We remind the reader that a \emph{derangement} of a set of cardinality $m \geq 2$ is a permutation of that set with no fixed points. We also remind the reader that if $e_t$ is a polytabloid and $\pi \in S_n$, then $\pi e_{t} = e_{\pi t}$. 

Before getting into the details, we sketch the idea of the proof, which is simple:  for each $\sigma \in S_n$ we identify an explicit element of $\mathcal{S}^\lambda$  fixed by $\sigma$.  For many classes of elements, this is straightforward since if $\sigma$ is a derangement of $m \leq n-3$ or $n-4$ elements, then put the fixed elements into the unique column of a tableau of shape $(n-2,1,1)$, or into the first two columns of a tableau of shape $(n-2,2)$.  For derangements of larger cardinalities we construct a fixed element as follows.  

Let $t$ be a tableau of shape $\lambda$ and let $\sigma \in S_n$ be of order $k$.  Define 
\[
E_{\sigma,t} = \sum_{j=0}^{k-1} e_{\sigma^jt}.
\]
It is clear that $\sigma$ fixes $E_{\sigma,t}$ since $\sigma e_t = e_{\sigma t}$ and the action is linear, but it is possible that $E_{\sigma,t}$ may be the zero polytabloid.  For example, in the Specht module $\mathcal{S}^{(n-1,1)}$, if $\sigma$ is an $n$-cycle and $t$ is any tableau, then $E_{\sigma,t} = 0$; consider $\sigma = (12\cdots n)$ and $t$ the tableau
\[
\begin{tabular}{cccc}
$1$ & $2$ & $\cdots$ & $n-1$ \\
$n$  
\end{tabular}.
\]
Then 
\[
\sigma^jt =
\begin{tabular}{cccc}
$j+1$ & $j+2$ & $\cdots$ & $j-1$ \\
$j$  
\end{tabular},
\]
so
\begin{align*}
e_{\sigma^jt} = 
\begin{tabular}{cccc}
\hline
$j+1$ & $j+2$ & $\cdots$ & $j-1$ \\
\hline
\underline{ \ \ $j$ \ \  }  
\end{tabular} - 
\begin{tabular}{cccc}
\hline
$j$ & $j+2$ & $\cdots$ & $j-1$ \\
\hline
\underline{ \ \ $j+1$ \ \  }
\end{tabular} := \mathbf{j} - (\mathbf{j+1}).
\end{align*}
Therefore 
\[
E_{\sigma,t} = \sum_{j=0}^{n-1} e_{\sigma^jt} = \mathbf{n} - \mathbf{1} + \sum_{j=1}^{n-1} \mathbf{j} - (\mathbf{j+1}) = 0,
\]
and so is not a candidate for a nontrivial fixed vector.  Much of our proof below will amount to showing that $E_{\sigma,t} \ne 0$ for various classes of $\sigma$.  

\medskip

\noindent \textbf{Notation.} For ease of exposition, if $t$ is the tableau
\[
t:=\begin{tabular}{cccc}
$a$ & $d$ & $\cdots$ & $e$ \\
$b$  \\
$c$
\end{tabular},
\]
then we write the associated tabloid $\lbrace t \rbrace$ as $\left\{\begin{smallmatrix} a \\ - \\ b \\ - \\ c \end{smallmatrix}\right\}$ and polytabloid $e_t$ as $\left[\begin{smallmatrix} a \\ - \\ b \\ - \\ c \end{smallmatrix}\right]$, so that
\[
\left[\begin{smallmatrix} a \\ - \\ b \\ - \\ c \end{smallmatrix}\right] = \left\{\begin{smallmatrix} a \\ - \\ b \\ - \\ c \end{smallmatrix}\right\} + 
\left\{\begin{smallmatrix} b \\ - \\ c \\ - \\ a \end{smallmatrix}\right\} +
\left\{\begin{smallmatrix} c \\ - \\ a \\ - \\ b \end{smallmatrix}\right\} -
\left\{\begin{smallmatrix} a \\ - \\ c \\ - \\ b \end{smallmatrix}\right\} -
\left\{\begin{smallmatrix} c \\ - \\ b \\ - \\ a \end{smallmatrix}\right\} -
\left\{\begin{smallmatrix} b \\ - \\ a \\ - \\ c \end{smallmatrix}\right\}.
\]
Similarly, if $s$ is the tableau
\[
s:=\begin{tabular}{cccc}
$a$ & $b$ & $\cdots$ & $e$ \\
$c$ & $d$  
\end{tabular},
\]
then we write $\lbrace s \rbrace = \left\{\begin{smallmatrix} a & b\\ c & d \end{smallmatrix}\right\}$ and $e_s = \left[\begin{smallmatrix} a & b \\ c & d \end{smallmatrix}\right] = \left\{\begin{smallmatrix} a & b\\ c & d \end{smallmatrix}\right\} - \left\{\begin{smallmatrix} c & b\\ a & d \end{smallmatrix}\right\} - \left\{\begin{smallmatrix} a & d\\ c & b \end{smallmatrix}\right\} + \left\{\begin{smallmatrix} c & d\\ a & b \end{smallmatrix}\right\}$.

\begin{proof}[Proof of Theorem \ref{specht_thm}, $\lambda = (n-2,1,1)$]
Let $X = \lbrace 1,\dots,n \rbrace$ and identify ${\rm Perm}_X$ with $S_n$.  If $Y \subseteq X$ has cardinality $m$, then we may naturally identify ${\rm Perm}_Y$ with a subgroup of ${\rm Perm}_X$, isomorphic to $S_m$.   Let $Y  = \lbrace y_1,\dots,y_m \rbrace \subseteq X$ have cardinality $m \leq n-3$. Write $X \setminus Y = \lbrace x_1,\dots,x_{n-m} \rbrace$.   Let $\sigma \in {\rm Perm}_Y$ and consider the tableau
\[
t:=\begin{tabular}{ccccccc}
$x_1$ & $x_4$ & $\cdots$ & $x_{n-m}$ & $y_1$ & $\cdots$ & $y_{m}$ \\
$x_2$  \\
$x_3$
\end{tabular}.
\]
Then $\sigma e_t = e_{\sigma t} = e_t$, with the latter equality due to $\sigma$ fixing the $x_i$.  Hence, permutations fixing 3 or more letters have fixed vectors in the Specht module $\mathcal{S}^{(n-2,1,1)}$. 

We break the rest of the proof up according to whether $\sigma$ is a derangement on $n-2$, $n-1$, or $n$ letters.  For notational ease, if $\sigma$ is a derangement of the subset $Y = \lbrace y_1,\dots, y_m \rbrace$ then we relabel its elements and take $Y = \lbrace 1,2,\dots,m \rbrace$.  For each class of derangements $\sigma$, we will define a tableau $t$ and then show that the sum $E_{\sigma,t}$ of polytabloids is nonzero.  To do this, we use Garnir calculus to write the constituents of $E_{\sigma, t}$ as sums of standard polytabloids.  Since the standard polytabloids form a basis for the Specht module, we will, in each case,  conclude that $E_{\sigma,t} \ne 0$ by a dimension count. We give explicit details in one case, and then will be content to simply list examples of  fixed vectors in a table, since the corresponding details are nearly identical in each case.

\bigskip

\noindent \textbf{\fbox{Case 1: $|Y| = n-2$}}  Let $\sigma \in {\rm Perm}_Y$ and write $\sigma$ as a product of disjoint cycles.  If the smallest cycle is a transposition, write $\sigma = (ab) \pi$, where $\pi$ is a derangement of the $n-4$ letters $c,\dots,d$.  We take $1$ and $2$ to be fixed by $\sigma$.  Let 
\[
t:=\begin{tabular}{ccccc}
$1$ & $c$ & $\cdots$ & $d$ &  $b$ \\
$2$  \\
$a$
\end{tabular}.
\]
Let $m$ be the order of $\sigma$ (which is even).  Then it is straightforward to work out that 
\[
E_{\sigma,t} = \sum_{j=0}^{m-1} e_{\sigma^j t} = \frac{m}{2} \left( \left[ \begin{smallmatrix} 1 \\ - \\ 2 \\ - \\ a \end{smallmatrix} \right] + \left[ \begin{smallmatrix} 1 \\ - \\ 2 \\ - \\ b \end{smallmatrix} \right] \right),
\]
which is nonzero because it is an independent sum of standard (basis) polytabloids.

\medskip

If the smallest cycle in the decomposition of $\sigma$ is a 3-cycle, then write $\sigma  = (abc) \pi$, where $\pi$ is a derangement of the $n-5$ letters $d,\dots,e$.  Let $f$ and $g$ be fixed by $\sigma$.  Let $t$ be the tableau
\[
t:=\begin{tabular}{cccccc}
$a$ & $d$ & $\cdots$ & $e$ &  $f$ & $g$ \\
$b$  \\
$c$
\end{tabular}.
\]
Then $\sigma e_t = e_{\sigma t} = e_{t}$.  

\medskip

Next, suppose that the smallest factor in the cycle decomposition of $\sigma$ is a $k$-cycle, with $4 \leq k \leq \lfloor \frac{n+1}{2} \rfloor$.  Without loss of generality, we take the letters 1 and 2 to be fixed and $\sigma = (3 \cdots k+2)\pi$, where $\pi$ is a derangement of the remaining letters, and 
\[
t:=\begin{tabular}{ccccccc}
$1$ & $4$ & $\cdots$ & $k+2$ &  $k+3$ & $\cdots$  & $n$ \\
$2$  \\
$3$
\end{tabular}.
\]
We let $m$ denote the order of $\sigma$.  It is routine to verify that 
\[
\sigma^j t  =\begin{tabular}{ccccccc}
$1$ & $j+4$ & $\cdots$ & $j+2$ &  $\pi^j(k+3)$ & $\cdots$  & $\pi^j(n)$ \\
$2$  \\
$j+3$
\end{tabular}.
\]
The polytabloid $e_{\sigma^j t}$ has no column descents and is row-equivalent to the standard polytabloid \begin{tiny}$\left[ \begin{smallmatrix} 1 \\ - \\ 2 \\ - \\ j+3 \end{smallmatrix} \right]$\end{tiny}.  Therefore, 
\[
E_{\sigma,t}  = \left(\frac{m}{k} \right) \sum_{j=0}^{k-1} \left[ \begin{smallmatrix} 1 \\ - \\ 2 \\ - \\ j+3 \end{smallmatrix} \right],
\]
as a sum of basis polytabloids is non-zero and, by construction, fixed by $\sigma$.

\medskip

Finally, if $\sigma$ is an $(n-2)$-cycle then by relabeling the elements it suffices to assume $\sigma  = (1\cdots n-2)$. Let $t$ be the tableau
\[
t:=\begin{tabular}{cccccc}
$1$ & $2$ & $\cdots$ & $n-2$ \\
$n-1$  \\
$n$
\end{tabular}.
\]
Then for $j=0,\dots n-3$ we have
\[
\sigma^jt=\begin{tabular}{ccccccc}
$j+1$ & $j+2$ & $\cdots$ & $n-2$ & 1 & $\cdots$  & $j$\\
$n-1$  \\
$n$
\end{tabular}.
\]
By Garnir calculus, and by replacing polytabloids with their row- and column-equivalents, it is straightforward to verify that for $j = 1,\dots,n-3$ we have
\[
e_{\sigma^j t} = \left[ \begin{smallmatrix} j+1 \\ - \\ n-1 \\ - \\ n \end{smallmatrix} \right] = \left[ \begin{smallmatrix} 1 \\ - \\ n-1 \\ - \\ n \end{smallmatrix} \right] - \left[ \begin{smallmatrix} 1 \\ - \\ j+1 \\ - \\ n \end{smallmatrix} \right] + \left[ \begin{smallmatrix} 1 \\ - \\ j+1 \\ - \\ n-1 \end{smallmatrix} \right],
\]
expressing $e_{\sigma^j t}$ as a sum of standard polytabloids. Summing over $j$, we obtain 
\begin{align*}
E_{\sigma, t} &= \sum_{j=0}^{n-3} e_{\sigma^j t} \\
&= \left[ \begin{smallmatrix} 1 \\ - \\ n-1 \\ - \\ n \end{smallmatrix} \right] + \sum_{j=1}^{n-3} \left[ \begin{smallmatrix} 1 \\ - \\ n-1 \\ - \\ n \end{smallmatrix} \right]- \left[ \begin{smallmatrix} 1 \\ - \\ j+1 \\ - \\ n \end{smallmatrix} \right] + \left[ \begin{smallmatrix} 1 \\ - \\ j+1 \\ - \\ n-1 \end{smallmatrix} \right] \\
&= (n-2) \left[ \begin{smallmatrix} 1 \\ - \\ n-1 \\ - \\ n \end{smallmatrix} \right] - \sum_{j=1}^{n-3} \left[ \begin{smallmatrix} 1 \\ - \\ j+1 \\ - \\ n \end{smallmatrix} \right]+ \sum_{j=1}^{n-3}\left[ \begin{smallmatrix} 1 \\ - \\ j+1 \\ - \\ n-1 \end{smallmatrix} \right].
\end{align*} 
This is a sum of $3n-8$ basis vectors in a space of dimension $(n-1)(n-2)/2$.  Once $n \geq 6$ this is an independent sum by comparing dimensions.  When $n=5$, the representation is easily verified to be unisingular anyway.

\medskip

\noindent \textbf{\fbox{Cases 2 \& 3: $|Y| \in \lbrace n-1,n \rbrace$}} In the following table we give examples of fixed vectors for each class of permutation $\sigma$.  The cases themselves break up into four subcases, depending on the factorization of $\sigma$ into disjoint permutations.  The four subcases depend on the size of the smallest cycle type occurring in the factorization.  Therefore, we consider when $\sigma$ is a single cycle; when $\sigma$ factors as a transposition and a derangement of the remaining letters; a 3-cycle and a derangement; and a $k$-cycle and a derangement, where $k \geq 4$.  For ease of exposition, we denote by 
\[
(a\cdots b)[c \dots d],
\] 
the product of a cycle $(a\cdots b)$ and a derangement of the remaining letters $[c \cdots d]$, which may or may not be a cycle.  We denote by $m$ the order of $\sigma \in S_n$.  In the table below, we note that each $E_{\sigma,t}$ is written as a sum of standard (basis) polytabloids of dimension $\leq \dim \mathcal{S}^{(n-2,1,1)}$, and is therefore nonzero. We label any fixed elements in {\color{red} red}.  This completes the proof, since for each class of permutation it suffices (by relabeling elements) to consider a single element and show that it has a non-zero fixed vector in the vector space $\mathcal{S}^{(n-2,1,1)}$. 

\medskip

\begin{tiny}
\begin{center}
\begin{tabular}{|c|c|c|c|}
\hline
$|Y|$ & $\sigma$ & Tableau $t$ & Fixed Vector \\
\hline
$n-1$ & $(12\cdots (n-1))$ &\begin{tiny}\begin{tabular}{cccc}
$1$ & $2$ & $\cdots$ & $n-2$ \\
$n-1$  \\
${\color{red} n}$
\end{tabular}\end{tiny} & \begin{tiny} $E_{\sigma,t} = -\sum_{j=2}^{n-2}   \left[ \begin{smallmatrix} 1 \\ - \\ j \\ - \\ j+1 \end{smallmatrix} \right]$ \end{tiny} \\
\hline 
$n-1$ & $((n-1)n)[2\cdots(n-3)]$, &  \begin{tiny}$\begin{tabular}{cccccc}
${\color{red}1}$ & $2$ & $\cdots$ & $n-3$ &  $n-1$   \\
$n-2$  \\
$n$
\end{tabular}$\end{tiny}& $E_{\sigma,t} =  \sum_{j = 0 \atop j\text{ even}}^{m-1} \left[ \begin{smallmatrix} 1 \\ - \\ \sigma^j(n-3) \\ - \\ n \end{smallmatrix} \right]$\\
&&&  $+  \sum_{j = 0 \atop  j\text{ odd}}^{m-1} \left[ \begin{smallmatrix} 1 \\ - \\ \sigma^j(n-3) \\ - \\ n-1 \end{smallmatrix} \right]$  \\
%&$1$ fixed && \begin{tiny} $\sum_{j=2}^{n-3} \left[ \begin{smallmatrix} 1 \\ - \\ j \\ - \\ n-2 \end{smallmatrix} \right] - \sum_{j=1}^{(n-3)/2} \left[ \begin{smallmatrix} 1 \\ - \\ 2j \\ - \\ n-1 \end{smallmatrix} \right]-$\end{tiny}   \\
%&&& \begin{tiny} $\sum_{j=1}^{(n-5)/2} \left[ \begin{smallmatrix} 1 \\ - \\ 2j+1 \\ - \\ n \end{smallmatrix} \right]$ \end{tiny}\\
\hline
$n-1$ & $(abc)[d\cdots e]$ & \begin{tiny}$\begin{tabular}{cccccc}
$a$ & $d$ & $\cdots$ & $e$ &  ${\color{red}f}$   \\
$b$  \\
$c$
\end{tabular}$\end{tiny}& $e_t$ \\
\hline
$n-1$ & $(23\cdots k+1 ) [(k+2) \cdots n]$ & \begin{tiny}\begin{tabular}{cccccc}
${\color{red}1}$ & $2$ & $\cdots$ & $k-1$& $\cdots$ & $n$ \\
$k$  \\
$k+1$
\end{tabular}\end{tiny}  & $E_{\sigma,t} = \left(\frac{m}{k} \right) \left(\left[ \begin{smallmatrix} 1 \\ - \\ k \\ - \\ k+1 \end{smallmatrix} \right] - \left[ \begin{smallmatrix} 1 \\ - \\ 2 \\ - \\ k \end{smallmatrix} \right] + \sum_{j=2}^{k-1} \left[ \begin{smallmatrix} 1 \\ - \\ j \\ - \\ j+1 \end{smallmatrix} \right]\right)$\\
\hline
$n$ & $(12\cdots n)$ & \begin{tiny}\begin{tabular}{cccc}
$1$ & $2$ & $\cdots$ & $n-2$ \\
$n-1$  \\
$n$
\end{tabular}\end{tiny} & \begin{tiny} $E_{\sigma,t} = 2 \sum_{j=2}^{n-1} \left[ \begin{smallmatrix} 1 \\ - \\ j \\ - \\ j+1 \end{smallmatrix} \right]  - \sum_{j=2}^{n-2} \left[ \begin{smallmatrix} 1 \\ - \\ j \\ - \\ j+2 \end{smallmatrix} \right]$ \end{tiny}\\
\hline 
$n$ & $(1n)[23\cdots(n-1)]$ &\begin{tiny}\begin{tabular}{cccc}
$1$ & $2$ & $\cdots$ & $n-2$ \\
$n-1$  \\
$n$
\end{tabular}\end{tiny} & $E_{\sigma, t} = \sum_{j=0}^{m-1} (-1)^j  \left[ \begin{smallmatrix} 1 \\ - \\ \sigma^j(n-1) \\ - \\ n \end{smallmatrix} \right]$ \\
\hline
$n$ & $(abc)[d \cdots e]$ & \begin{tiny}$\begin{tabular}{cccccc}
$a$ & $d$ & $\cdots$ & $e$    \\
$b$  \\
$c$
\end{tabular}$\end{tiny}& $e_t$ \\
\hline
$n$ & $(1\cdots k)[(k+1) \cdots n]$ & \begin{tiny}$\begin{tabular}{cccccccc}
$1$  & $\cdots$ & $k-2$ & $k+1$ & $\cdots$ & $n$    \\
$k-1$  \\
$k$
\end{tabular}$\end{tiny}  & $E_{\sigma, t} = \left(\frac{m}{k} \right) \left(\left[ \begin{smallmatrix} 1 \\ - \\ k-1 \\ - \\ k \end{smallmatrix} \right] + \left[ \begin{smallmatrix} 1 \\ - \\ 2 \\ - \\ 3 \end{smallmatrix} \right] + \left[ \begin{smallmatrix} 1 \\ - \\ 2 \\ - \\ k \end{smallmatrix} \right] \right)$  \\
&&& $+ \frac{m}{k} \sum_{j=3}^{k-1} \left[ \begin{smallmatrix} 1 \\ - \\ j \\ - \\ j+1 \end{smallmatrix} \right] - \left[ \begin{smallmatrix} 1 \\ - \\ j-1 \\ - \\ j+1 \end{smallmatrix} \right] + \left[ \begin{smallmatrix} 1 \\ - \\ j-1 \\ - \\ j \end{smallmatrix} \right]$\\
 \hline
\end{tabular}
\end{center}
\end{tiny}
\end{proof}

Next we turn to the case $\lambda = (n-2,2)$. Rather than work out explicit forms for the fixed polytabloids  like in the proof of Part 1, we will only sketch the proof since the details are nearly identical.  %In the process we will identify the class of elements which we conjecture do not afford 1 as an eigenvalue when $n$ is odd.

\bigskip

The shape of the tableaux themselves indicate how to find the fixed polyabloids:
\[
\begin{tabular}{cccccccc}
$\square \square$ & $\underbrace{\square \square \square \cdots \square}_{n-4}$     \\
$\square \square$
\end{tabular}.
\]
If $\sigma \in S_n$ has at least four fixed points, then place four of them in the the $2 \times 2$ block; the associated polytabloid is then fixed by $\sigma$.  If $\sigma$ has 1, 2, or 3 fixed points, then relabel the elements so that the sets $\lbrace 1 \rbrace$, $\lbrace 1,2 \rbrace$, or $\lbrace 1,2,3 \rbrace$ are fixed, respectively.  In those cases, fill the $2 \times 2$ block of a tableau as follows, respectively:
\[
\begin{tabular}{cccccccc}
${\color{red} 1}$ & $*$  \\
$*$ & $*$
\end{tabular}, \qquad 
\begin{tabular}{cccccccc}
${\color{red} 1}$ & {\color{red} 2}  \\
$*$ & $*$
\end{tabular}, \qquad 
\begin{tabular}{cccccccc}
${\color{red} 1}$ & {\color{red} 2}  \\
${\color{red} 3}$ & $*$
\end{tabular}.
\]
Depending on the factorization of $\sigma$ into disjoint cycles, the remainder of the entries in the tableau $t$ can be filled so that the element $E_{\sigma_,t}$ (which is fixed by $\sigma$) can be expressed as a non-zero sum of basis polytabloids.

If $\sigma$ fixes no element of $\lbrace 1,\cdots,n \rbrace$, then we can again proceed as in the proof of Part 1 and work on a case-by-case basis, depending on the factorization of $\sigma$.  In each of the following cases:
\begin{itemize}
\item $\sigma$ is an $n$-cycle,
\item the smallest factor of $\sigma$ is a 2-cycle,
\item the smallest factor of $\sigma$ is a 3-cycle,
\item the smallest factor of $\sigma$ is a 4-cycle,
\item the smallest factor of $\sigma$ is a $k$-cycle, $5 \leq k \leq \lfloor \frac{n+1}{2} \rfloor$, 
\end{itemize}
it follows by nearly identical arguments as above that we can produce a tableau $t$ such that the element $E_{\sigma, t}$, which is fixed by $\sigma$, is nonzero.  We give the details of a sample case to conclude this sketch.

Let $n$ be even.  Suppose $\sigma$ factors as a transposition and an $(n-2)$-cycle.  Then the order of $\sigma$ is $n-2$. Relabel the elements so that $\sigma = (12)(3 \cdots n)$ and let 
\[
t = \begin{tabular}{cccccccc}
1 & 2 & 3 & $\cdots$ &$n-2$ \\
$n-1$ & $n$
\end{tabular}.
\]
Then
\[
\sigma^jt = 
\begin{cases}
\begin{tabular}{cccccccc}
1 & 2 & $j+3$ & $\cdots$ &$n$&3 & $\cdots$ & $j$ \\
$j+1$ & $j+2$ 
\end{tabular} & \text{if $2 \leq j \leq n-4$ is even} \\[10pt] 
\begin{tabular}{cccccccc}
2 & 1 & $j+3$ & $\cdots$ &$n$&3 & $\cdots$ & $j$ \\
$j+1$ & $j+2$ 
\end{tabular} & \text{if $3 \leq j \leq n-3$ is odd.}
\end{cases}
\]
If $j$ is even then $e_{\sigma^jt}$ is row-equivalent to the standard polytabloid \begin{tiny}$\left[ \begin{smallmatrix} 1 & 2 \\ j+1 & j+2 \end{smallmatrix} \right]$\end{tiny}, while if $j$ is odd then $e_{\sigma^jt}$ is row-equivalent to the polytabloid \begin{tiny}$\left[ \begin{smallmatrix} 2 & 1 \\ j+1 & j+2 \end{smallmatrix} \right]$\end{tiny}. Using Garnir calculus, we can rewrite this last polytabloid as a linear combination of basis polytabloids:
\[
\left[ \begin{smallmatrix} 2 & 1 \\ j+1 & j+2 \end{smallmatrix} \right] = \left[ \begin{smallmatrix} 1 & 2 \\ j+1 & j+2 \end{smallmatrix} \right] - \left[ \begin{smallmatrix} 1 & 3 \\ 2 & j+2 \end{smallmatrix} \right] + \left[ \begin{smallmatrix} 1 & 3 \\ 2 & j+1 \end{smallmatrix} \right].
\]
Then $E_{\sigma,t}$ has the following expression as a sum of basis elements:
\[
E_{\sigma,t} = \left[ \begin{smallmatrix} 1 & 2 \\ 3 & n \end{smallmatrix} \right] + \sum_{j=3}^{n-1}  \left[ \begin{smallmatrix} 1 & 2 \\ j & j+1 \end{smallmatrix} \right] + \sum_{j=2}^{(n-2)/2}  \left (\left[ \begin{smallmatrix} 1 & 2 \\ 2 & 2j \end{smallmatrix} \right] -  \left[ \begin{smallmatrix} 1 & 2 \\ 2 & 2j+1 \end{smallmatrix} \right] \right),
\]
which is nonzero by a dimension count.  

\bigskip

Now that we have identified new classes of unisingular representations, we can ask about the associated unirealization problem.  If $\mathcal{S}^\lambda$ is a Specht module for $S_n$ or $A_n$, then we can consider it as a representation over $\F_2$ simply by reducing the matrix entries modulo 2.  And if $\mathcal{S}^\lambda$ is additionally unisingular, then the unisingularity is preserved by reduction.

By \cite[Main Theorem]{james_mathas} the Specht module $\mathcal{S}^{(n-2,1,1)}$ is \emph{never} irreducible mod 2 since both $(n-2,1,1)$ and the conjugate partition $(3,1,\dots,1)$ are 2-singular.  In fact, the representation $\mathcal{S}^{(n-2,1,1)}$ contains a copy of the trivial representation in its decomposition modulo 2.  On the other hand, the representation $\mathcal{S}^{(n-2,2)}$ is irreducible for certain values of $n$; for $5 \leq n \leq 13$ it is irreducible precisely when $n=7,11$.  

Since $S_7$ is not a transitive subgroup of $S_{15}$ or $S_{16}$, this representation of $S_7$, which is 14-dimensional, cannot be unirealized over $\Q$ by a hyperelliptic Jacobian; similarly for $S_{11}$.  But all of this raises two problems for further investigation:

\begin{enumerate}
\item Determine which Specht modules are unisingular, either for $S_n$ or $A_n$.
\item Given a unisingular $\mathcal{S}^\lambda$ for $S_n$ (or $A_n$) of dimension $f^\lambda = 2d$ which remains irreducible mod 2, determine whether or not there are transitive groups $T \subseteq S_{2d+1}$ or $S_{2d+2}$ with quotient $S_n$ (or $A_n$).  This could give a bound on the minimal degree needed for a hyperelliptic unirealization of the representation  $\mathcal{S}^\lambda$.
\end{enumerate}

\section{A Conjecture on Non-Unisingularity} \label{not_uni}

If $n$ is odd then we conjecture that a certain Specht module is never unisingular for $S_n$, but is unisingular  for $A_n$.  We first state the conjecture and then provide evidence.

\begin{conjecture}\label{nonconj}
Let $n \geq 5$ be odd.  Then the Specht module $\mathcal{S}^{(n-2,2)'}$ does not afford 1 on the class of elements of that factor as a transposition and an $(n-2)$-cycle.  Otherwise, every element has 1 as an eigenvalue. 
\end{conjecture}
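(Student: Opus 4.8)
The plan is to move the question onto the untwisted Specht module $\mathcal{S}^{(n-2,2)}$. Recall from the discussion above that $\mathcal{S}^{(n-2,2)'} = \mathcal{S}^{(n-2,2)} \otimes \mathcal{S}^{(1,\dots,1)}$, so for $\sigma \in S_n$ the eigenvalues of $\sigma$ on $\mathcal{S}^{(n-2,2)'}$ are $\operatorname{sgn}(\sigma)$ times the eigenvalues of $\sigma$ on $\mathcal{S}^{(n-2,2)}$; in particular $1$ is an eigenvalue of $\sigma$ on $\mathcal{S}^{(n-2,2)'}$ if and only if $\operatorname{sgn}(\sigma)$ is an eigenvalue of $\sigma$ on $\mathcal{S}^{(n-2,2)}$. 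When $\sigma$ is even this holds by Theorem~\ref{specht_thm}, since $\mathcal{S}^{(n-2,2)}$ is unisingular. So the conjecture is equivalent to: for an \emph{odd} $\sigma \in S_n$, the scalar $-1$ is an eigenvalue of $\sigma$ on $\mathcal{S}^{(n-2,2)}$ if and only if $\sigma$ does not have cycle type $(n-2,2)$. (Since $n$ is odd, $n-2$ is odd, hence elements of cycle type $(n-2,2)$ are odd, have order $2n-4$, and are exactly those factoring as a transposition and an $(n-2)$-cycle.)

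To detect the eigenvalue $-1$ I would use Young's rule. Let $M^{(n-1,1)}$ and $M^{(n-2,2)}$ denote the permutation modules of $S_n$ on the $n$ points and on the unordered pairs. Then over $\mathbf{C}$
\[
M^{(n-1,1)} \cong \mathcal{S}^{(n)} \oplus \mathcal{S}^{(n-1,1)}, \qquad M^{(n-2,2)} \cong \mathcal{S}^{(n)} \oplus \mathcal{S}^{(n-1,1)} \oplus \mathcal{S}^{(n-2,2)},
\]
each summand occurring once, whence $M^{(n-2,2)} \cong M^{(n-1,1)} \oplus \mathcal{S}^{(n-2,2)}$ as $S_n$-modules. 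Comparing multisets of eigenvalues of a fixed $\sigma$ on both sides, the eigenvalues of $\sigma$ on $\mathcal{S}^{(n-2,2)}$ are those of $\sigma$ on pairs with those of $\sigma$ on points removed. Since the multiplicity of $-1$ as an eigenvalue of a permutation matrix equals the number of even-length orbits of that permutation, this gives
\[
(\text{multiplicity of }-1\text{ on }\mathcal{S}^{(n-2,2)}) \;=\; \#\{\text{even-length orbits of }\sigma\text{ on }2\text{-subsets}\} \;-\; \#\{\text{even-length cycles of }\sigma\},
\]
so the conjecture becomes a purely combinatorial comparison of these two counts.

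Now let $\sigma$ be odd with $e$ even cycles (so $e$ is odd, $e \ge 1$) and $d$ odd cycles; because $n$ is odd, $\sigma$ is not a product of even cycles only, so $d \ge 1$. A pair whose two elements lie in distinct cycles of lengths $p,q$ lies in an orbit of size $\operatorname{lcm}(p,q)$, and there are exactly $\gcd(p,q)$ such orbits; all of them have even length as soon as $p$ or $q$ is even. Hence pairs straddling two cycles contribute at least $\binom{d+e}{2}-\binom{d}{2} = de + \binom{e}{2}$ even-length orbits, while pairs inside a single even cycle of length $p$ contribute at least $\tfrac{p-2}{2}$ more (the non-``antipodal'' such pairs fall into orbits of size $p$). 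If $d \ge 2$ then $de + \binom{e}{2} \ge 2e > e$ and we are done; if $d = 1$ the count is at least $e + \binom{e}{2} + \sum_{\text{even }C}\tfrac{|C|-2}{2}$, which strictly exceeds $e$ unless $e = 1$ and the unique even cycle is a transposition, i.e. unless $\sigma$ has cycle type $(n-2,2)$. For that one remaining cycle type a direct count closes the argument: taking $\sigma = (1\,2\,\cdots\,(n-2))\,((n-1)\,n)$, the pair $\{n-1,n\}$ is fixed, every orbit of pairs inside the $(n-2)$-cycle has the odd size $n-2$, and the $2(n-2)$ pairs $\{i,n-1\},\{i,n\}$ with $3 \le i \le n$ form one orbit of even size $2(n-2)$ because $\gcd(n-2,2) = 1$; so $\sigma$ has exactly one even-length orbit on pairs and exactly one even cycle, and the displayed multiplicity is $0$. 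This establishes the conjecture, and — as the exceptional class consists of odd permutations and $(n-2,2)\ne(n-2,2)'$ — it also yields the remark that $\mathcal{S}^{(n-2,2)'}$ restricts to an irreducible unisingular representation of $A_n$.

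I expect the only slightly delicate point to be the bookkeeping of orbits of pairs inside a single even cycle of length $p$, where the ``antipodal'' pairs $\{a,\sigma^{p/2}a\}$ sit in orbits of size $p/2$ rather than $p$; but the cross-cycle bound $de + \binom{e}{2}$, together with $d \ge 1$, already handles every case except cycle type $(n-2,2)$, for which only the short exact count above is needed, so this bookkeeping never has to be worked out completely. I would also point out why the constructive strategy used in the proof of Theorem~\ref{specht_thm} does not suffice on its own: the twisted sums $F_{\sigma,t} = \sum_{j=0}^{k-1}(-1)^j e_{\sigma^j t}$, which satisfy $\sigma F_{\sigma,t} = -F_{\sigma,t}$ whenever $\sigma$ has even order $k$, can at best exhibit $-1$ as an eigenvalue; proving that $-1$ is \emph{not} an eigenvalue on the exceptional class requires an eigenvalue-multiplicity count in any case, which is why I would base the whole proof on the identity $M^{(n-2,2)} \cong M^{(n-1,1)} \oplus \mathcal{S}^{(n-2,2)}$.
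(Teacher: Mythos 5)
Your proposal is correct, and it actually does more than the paper does: the statement is left in the paper as Conjecture \ref{nonconj}, supported only by machine computation for odd $n \le 23$ (Section \ref{not_uni} compares the multiplicity of $(x-1)$ on the class $C_{(n-2,2)}$ in the big permutation module $\mathcal{M}^{(2,2,1,\dots,1)}$ with the multiplicities in its other Specht constituents, using the \texttt{TransitiveGroups} database and the online Atlas). Your argument is the same idea made uniform, with two simplifications that make it a genuine proof. First, the sign twist: $1$ is an eigenvalue of $\sigma$ on $\mathcal{S}^{(n-2,2)'}$ iff $\operatorname{sgn}(\sigma)$ is an eigenvalue of $\sigma$ on $\mathcal{S}^{(n-2,2)}$, so the even classes are disposed of by Theorem \ref{specht_thm} and only the eigenvalue $-1$ on the untwisted module matters for odd $\sigma$. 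Second, replacing $\mathcal{M}^{(n-2,2)'}$ by the much smaller identity $\mathcal{M}^{(n-2,2)} \cong \mathcal{M}^{(n-1,1)} \oplus \mathcal{S}^{(n-2,2)}$ turns the multiplicity of $-1$ into the purely combinatorial quantity (number of even-length orbits on $2$-subsets) minus (number of even cycles), since each orbit of length $m$ contributes each $m$-th root of unity once. Your orbit counts are right: a pair straddling cycles of lengths $p,q$ lies in one of $\gcd(p,q)$ orbits of size $\operatorname{lcm}(p,q)$, even whenever $p$ or $q$ is; an even $p$-cycle contributes $(p-2)/2$ internal orbits of size $p$; and the case analysis ($d\ge 2$; $d=1$, $e\ge 3$; $d=1$, $e=1$, $p\ge 4$; $d=1$, $e=1$, $p=2$) is exhaustive because $n$ odd forces $d\ge 1$ and $\operatorname{sgn}(\sigma)=(-1)^e=-1$ forces $e$ odd. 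The direct count on the exceptional class (one even orbit on pairs, one even cycle, hence multiplicity $0$) is also correct and reproduces the paper's data, e.g.\ $\chi_{(3,2)'}(1)=6$ for $n=5$; the only slip is the index range for the straddling pairs, which should be $\{i,n-1\},\{i,n\}$ with $1\le i\le n-2$, not $3\le i\le n$. So, modulo writing out the deferred antipodal-orbit bookkeeping (which, as you note, is never needed for the inequality), you have a proof of the conjecture in characteristic zero, which is the setting of the paper's computations, rather than merely another route to its evidence; it would be worth stating explicitly that this also pins down the multiplicity of $-1$ to be exactly $0$ only on $C_{(n-2,2)}$.
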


When $n$ is odd the class of permutations of Conjecture \ref{nonconj} is odd, hence does not meet $A_n$.  A consequence of our conjecture is that the restriction $\mathcal{S}^{(n-2,2)'} \downarrow_{A_n}$ is unisingular.  However, because $(n-2,2)' \ne (n-2,2)$,
\[
\mathcal{S}^{(n-2,2)'} \downarrow_{A_n} = \mathcal{S}^{(n-2,2)} \downarrow_{A_n}
\]
which we already knew to be unisingular by Theorem \ref{specht_thm}.

Continuing, let $C_{(n-2,2)}$ be the class of elements of $S_n$ that decompose as a product of an $(n-2)$-cycle and a transposition.  To show that this class does not afford 1 as an eigenvalue in the representation $\mathcal{S}^{(n-2,2)'}$ we will 
\begin{enumerate}
\item compute the multiplicity of $(x-1)$ in the characteristic polynomial of the permutation module $\mathcal{M}^{(n-2,2)'}$ on $C_{(n-2,2)}$, and
\item compare this to the multiplicities of $(x-1)$ in the simple factors of $\mathcal{M}^{(n-2,2)'}$ on $C_{(n-2,2)}$; since $\mathcal{S}^{(n-2,2)'}$ appears once as a factor of $\mathcal{M}^{(n-2,2)'}$, we can just compare powers of $(x-1)$ and conclude that $\mathcal{S}^{(n-2,2)'}$ is not unisingular.
\end{enumerate}
An advantage to getting the eigenvalue-1 multiplicity in this way is that the permutation modules are  easy to generate in \textsf{Magma}.

Recall that the permutation module $\mathcal{M}^{(n)}$ and the Specht module $\mathcal{S}^{(n)}$ are isomorphic to the trivial module.  Then, for example, 
\begin{align*}
\mathcal{M}^{(n)} &= \mathcal{S}^{(n)} \\
\mathcal{M}^{(n-1,1)} &= \mathcal{S}^{(n-1,1)} \oplus \mathcal{S}^{(n)}\\
\mathcal{M}^{(n-2,2)} &= \mathcal{S}^{(n-2,2)} \oplus \mathcal{S}^{(n-1,1)} \oplus \mathcal{S}^{(n)}\\
\mathcal{M}^{(n-2,1,1)} &= \mathcal{S}^{(n-2,1,1)} \oplus \mathcal{S}^{(n-2,2)} \oplus 2\mathcal{S}^{(n-1,1)} \oplus \mathcal{S}^{(n)}.
\end{align*}
Now we work through the simplest case where $n=5$ to give a sense of what is involved, though this process generalizes naturally to larger $n$.  Our goal is in this case is to show that the characteristic polynomial of the class $C_{(n-2,2)}$ in the representation $\mathcal{S}^{(2,2,1)}$ is not divisible by $(x-1)$.

The trivial module $\mathcal{M}^{(5)} = \mathcal{S}^{(5)}$ has one power of $(x-1)$ on the class $C_{(n-2,2)}$.  Next, there is a unique transitive action of $S_5$ on 5 points, decomposing into $\mathcal{S}^{(4,1)} \oplus \mathcal{S}^{(5)}$ as above.  By computing with the character of this representation, we conclude that $\mathcal{S}^{(4,1)}$ has one power of $(x-1)$ on the class $C_{(n-2,2)}$; this is basic representation theory, and also easily implementable in \textsf{Magma}.

To construct the module $\mathcal{M}^{(3,2)}$, we search the \texttt{TransitiveGroups} database for transitive actions of $S_5$ on 10 points.  There are exactly two such actions.  Setting \texttt{T:=TransitiveGroups(10)}, the two transitive groups isomorphic to $S_5$ are \texttt{T[12]} and \texttt{T[13]}.

The group \texttt{T[13]} is the one we want, which we verify by decomposing the representation in \textsf{Magma} into irreducibles and comparing dimensions of the simple factors agains those of $\mathcal{S}^{(3,2)}$, $\mathcal{S}^{(4,1)}$, and $\mathcal{S}^{(5)}$.     The following code produces matrix generators of the permutation representation, say $\mathtt{a}$ and $\mathtt{b}$ (this is the result of the command \texttt{M:=Maximal;}), and then decomposes the permutation representation into irreducibles.

\medskip

\begin{tabular}{l}
\texttt{T:=TransitiveGroups(10);} \\
\texttt{M := PermutationModule(T[13], Rationals());}\\
\texttt{M:Maximal;} \\[5pt]

\texttt{A:=MatrixAlgebra<Rationals(), 10 | a,b>;} \\
\texttt{MM:=RModule(A);} \\
\texttt{B:=CompositionFactors(MM);} \\
\texttt{B;}
\end{tabular}

\medskip

\noindent The characteristic polynomial of the class $C_{(n-2,2)}$ on $\mathcal{M}^{(3,2)}$ is exactly divisible by $(x-1)^3$, thus we conclude that $\mathcal{S}^{(3,2)}$ has exactly one power of $(x-1)$ on the same class.

Iterating this procedure, we find that there are exactly three transitive permutation representations of $S_5$ on 20 points; if \texttt{TT:=TransitiveGroups(20)}, then they are \texttt{TT[30]}, \texttt{TT[32]}, and \texttt{TT[35]}.  By decomposing each module, comparing to the known decomposition of $\mathcal{M}^{(3,1,1)}$ above, and using the fact that both 4-dimensional components of $\mathcal{M}^{(3,1,1)}$ are isomorphic as $S_5$-modules, we conclude that \texttt{TT[30]} is the permutation group that we seek for our decomposition.  Computing characteristic polynomials of each side, we conclude that the characteristic polynomial of $\mathcal{S}^{(3,1,1)}$ on $C_{(n-2,2)}$ is divisible by exactly one power of $(x-1)$. 

The permutation module $\mathcal{M}^{(2,2,1)}$ has dimension 30 and decomposes as
\begin{align} \label{kostka}
\mathcal{M}^{(2,2,1)} = \mathcal{S}^{(2,2,1)} \oplus  \mathcal{S}^{(3,1,1)} \oplus 2\mathcal{S}^{(3,2)} \oplus 2\mathcal{S}^{(4,1)} \oplus \mathcal{S}^{(5)}.
\end{align}
There are three transitive permutation representations of $S_5$ on 30 points; if 

\noindent \texttt{TTT:=TransitiveGroups(30)}, then they are \texttt{TTT[22]}, \texttt{TTT[25]}, and \texttt{TTT[27]}.  A similar analysis to the ones above shows that \texttt{TTT[27]} is the representation we seek.  We find that the characteristic polynomial of $\mathcal{M}^{(2,2,1)}$ on $C_{(n-2,2)}$ has eigenvalue 1 of multiplicity 6. The decomposition (\ref{kostka}) has 7 summands, 6 of which have been shown to have eigenvalue 1 of multiplicity 1.  We conclude that $\mathcal{S}^{(2,2,1)}$ is not unisingular. 

As $n$ gets larger, the decompositions of $\mathcal{M}^{(2,2,1,\dots,1)}$ as in (\ref{kostka}) get more complicated, but this same strategy is, in principle, implementable.  For $n=15,\dots,23$, the online Atlas at \href{http://brauer.maths.qmul.ac.uk/Atlas/v3/}{http://brauer.maths.qmul.ac.uk/Atlas/v3/}, gives explicit matrix generators for the representations $\mathcal{S}^{(n-2,2)'}$ for $n=15,\dots,23$. 

\begin{rmk}
In \href{http://brauer.maths.qmul.ac.uk/Atlas/v3/}{http://brauer.maths.qmul.ac.uk/Atlas/v3/} it is not stated explicitly whether the precomputed representation of $S_n$ of degree $n(n-3)/2$ is $\mathcal{S}^{(n-2,2)}$ or $\mathcal{S}^{(n-2,2)'}$.   By comparing traces of elements against the known character tables of these groups, we verify that they are indeed $\mathcal{S}^{(n-2,2)'}$.   
\end{rmk}

For $n=5,7,9$ this procedure works since the \texttt{TransitiveGroups} database has all permutation groups of degree $\leq 48$.  For $n=11,13$ we construct the modules individually since they are out of range of the \texttt{TransitiveGroups} database. For $n=15,\dots,23$ we appeal to the representations precomputed in \href{http://brauer.maths.qmul.ac.uk/Atlas/v3/}{http://brauer.maths.qmul.ac.uk/Atlas/v3/}.  In all instances, the representations afford 1 as an eigenvalue on all classes except $C_{(n-2,2)}$, and on that class, evaluating the characteristic polynomial at 1 gives
\begin{center}
\begin{tabular}{c|c|c|c|c|c|c|c|c|c|c}
$n$ & 5 & 7 & 9 & 11 & 13 & 15  & 17 & 19 & 21 & 23 \\
\hline
$\chi_{(n-2,2)'}(1)$ &6&20&56&144& 352&832&1920&4352 & 9728 & 21504 
\end{tabular}
\end{center}
This pattern suggests that the character value on this class, if $n=2k+1$, is $2^{k-1}(2k-1)$.

\section{Concluding Remarks}

The paper \cite{cz} was concerned with specific families (certain groups of Lie type) and specific representations (Steinberg) that are unisingular.  We conclude by observing that many finite simple groups that are not covered by the results of \cite{cz} have unisingular representations over $\F_2$ as well.  We list just a few of them here, all of which have Schur indicator `+' or `$-$', and are therefore symplectic. However, none of these representations are candidates for a hyperelliptic unirealization over $\Q$ because the degree of the polynomial required does not divide the order of the group.  So if a hyperelliptic realization of any of these groups is possible, it would have to be over a nontrivial field extension of $\Q$ via a quotient of a larger group.  

\bigskip

\begin{center}
\begin{small}
\begin{tabular}{|c||c|c|c|c|c|c|c|c|c|c|c|}
\hline
Group & ${\rm L}_2(13)$ & ${\rm L}_3(3)$ & ${\rm U}_3(3)$ & $M_{11}$ & $M_{12}$ & $M_{22}$ & $\Aut(M_{22})$ & $M_{23}$ & $J_1$ & $J_2$ & ${\rm Sz}(8)$ \\
\hline
Dimension & 14 & 26 & 14 & 44 & 44, 144 & 34 & 34 & 120 & 76 & 36, 84 & 64 \\
\hline
\end{tabular}
\end{small}
\end{center}

\bigskip

\begin{rmk}
\begin{enumerate}
\item The groups $\Aut(M_{22})$ and ${\rm L}_3(4) \cdot 2_2$ have been shown to be Galois over $\Q$ in \cite{malle3}, and the Steinberg representation of ${\rm L}_3(4)$ of dimension 64 is unisingular by \cite{cz}).  However, the degrees of the polynomials in \cite{malle3} are not the ones required for hyperelliptic unirealization of these groups.  

More recently, in \cite{zywina}, Zywina showed that ${\rm L}_3(4)$ is Galois over $\Q$ using a degree-21 specialization $f(x)$ of Malle's parametric polynomials realizing ${\rm L}_3(4) \cdot 2_2$.  Since $\deg f = 21$, we get a 20 dimensional permutation representation
\[
{\rm L}_3(4) \hookrightarrow S_{21} \hookrightarrow \GL_{20}(\Z).
\]
This is precisely the \href{http://brauer.maths.qmul.ac.uk/Atlas/v3/matrep/L34G1-Zr20B0}{20-dimensional representation defined over $\Z$} in the online Atlas.  One easily checks that this representation is unisingular and irreducible over $\Q$. However, this representation decomposes over $\F_2$ as two 9-dimensional representations, and two 1-dimensionals.

\item The representation of ${\rm Sz}(8)$ listed in the table above is the Steinberg but, as recently as the preprint \cite{zywina}, it is is not known if ${\rm Sz}(8)$ is Galois over $\Q$, let alone unirealizable. 
\end{enumerate}
\end{rmk}

\end{document}